\documentclass[11pt]{article}
\setlength{\textwidth}{6.3in}
\setlength{\textheight}{8.7in}
\setlength{\topmargin}{0pt}
\setlength{\headsep}{0pt}
\setlength{\headheight}{0pt}
\setlength{\oddsidemargin}{0pt}
\setlength{\evensidemargin}{0pt}
\usepackage{amssymb,latexsym}
\usepackage{graphicx}
\usepackage{fancyhdr}
\usepackage{mathdots}
\usepackage{cite}
\usepackage[english]{babel}
\usepackage{nicefrac}
\usepackage{bbm}
\usepackage{amssymb}
\usepackage{tabularx}
\usepackage{epsfig}
\usepackage{amsfonts}
\usepackage{amscd}
\usepackage{amsmath,amsthm}
\usepackage{lmodern}
\RequirePackage{wasysym} \RequirePackage{setspace}
\usepackage{enumerate}
\usepackage{mathrsfs}
\usepackage[linktocpage, colorlinks=true,citecolor=blue,linkcolor=blue,urlcolor=blue]{hyperref}


\newcommand{\arxiv}[1]{\href{http://arxiv.org/abs/#1}{\texttt{arXiv:#1}}}

\sloppy

\numberwithin{equation}{section}
\theoremstyle{plain}
\newtheorem*{thm}{Theorem}
\newtheorem{theorem}{Theorem}[section]
\newtheorem{lemma}[theorem]{Lemma}
\newtheorem{corollary}[theorem]{Corollary}
\newtheorem{proposition}[theorem]{Proposition}
\newtheorem{fact}[theorem]{Fact}

\theoremstyle{definition}
\newtheorem{definition}[theorem]{Definition}
\newtheorem{example}[theorem]{Example}
\newtheorem{conjecture}[theorem]{Conjecture}

\theoremstyle{remark}
\newtheorem{remark}[theorem]{Remark}

\newcommand{\be}{\begin{equation}}
\newcommand{\ee}{\end{equation}}
\newcommand{\bea}{\begin{eqnarray}}
\newcommand{\eea}{\end{eqnarray}}
\newcommand{\bfa}{\begin{fact}}
\newcommand{\efa}{\end{fact}}
\newcommand{\bin}{\begin{inequality}}
\newcommand{\ein}{\end{inequality}}
\newcommand{\pnom}[3]{{#1 \choose #2}_{\hskip -3pt \mathrm{#3}}}
\newcommand{\enom}[3]{{#1 \choose #2}_{\hskip -3pt #3}}

\def \a {{\mathbf a}}
\def\b {{\mathbf b}}

\def\T {{\mathsf{T}_{\hskip-1pt m}}}


\title{\bf  Polynomial Triangles Revisited}


\author{Nour-Eddine Fahssi\\
\small Lab of Mathematics, Cryptography and Mechanics,\\[-0.8ex]
\small FSTM, University of Hassan II - Mohammedia, BP 146, \\[-0.8ex]
\small Mohammedia,  Morocco.\\
\small and \\
\small Lab of High Energy Physics, Modeling and Simulation,\\[-0.8ex]
 \small Faculty of Science, University of Mohammed V-Agdal, \\[-0.8ex]
\small  Rabat, Morocco. \\
\small \href{mailto:fahssi@fstm.ac.ma}{\tt fahssi@fstm.ac.ma}}

\date{}
\begin{document}
\maketitle


\begin{abstract}
 A polynomial triangle is an array whose inputs are the coefficients in integral powers of a polynomial. Although polynomial coefficients have appeared in several works, there is no systematic treatise on this topic. In this paper we plan to fill this gap.  We describe some aspects of these arrays, which generalize similar properties of the binomial coefficients. Some combinatorial models enumerated by polynomial coefficients, including lattice paths model, spin chain model and scores in a drawing game, are introduced. Several known binomial identities are then extended. In addition, we recursively calculate  generating functions of column sequences.  Interesting corollaries follow from these recurrence relations such as new formulae for the Fibonacci numbers and Hermite polynomials  in terms of trinomial coefficients.  Finally, we study some properties of the entropy density function which characterizes polynomial triangles in the thermodynamical limit.

  \bigskip\noindent \textbf{Keywords:} Polynomial triangles, binomial coefficients, extended Pascal triangle.\\
\small 2010 Mathematics Subject Classifications: 05A10, 05A15, 05A16, 05A19.
\end{abstract}
\tableofcontents

\section{ Introduction}
The theme of our study is extremely simple. It consists in investigating several aspects of  coefficients in integral  powers of polynomials. These coefficients generate an array, called a polynomial triangle, whose $k$-th row consists of the coefficients of the powers of $t$ in $p(t)^k$, for a given polynomial $p(t)$. The table naturally resembles Pascal's triangle and reduces to it when $p(t)=1+t$.

Historically, this very natural extension of the Pascal triangle may have been first discussed by Abraham De Moivre~\cite{moivre,balasu}  who found that the coefficient of $t^n$ of the polynomial \be \label{sans}\left(1+t+t^2+ \cdots +t^{m}\right)^k, \ee  ($k \geq 0$) arises in the solution of the following problem~\cite[p.389]{hall}:
\begin{center} \begin{minipage}[b]{4.5 in} ``There are $k$ dice with $m+1$ faces marked from 1 to $m+1$; if these
are thrown at random, what is the chance that the sum of the numbers
exhibited shall be equal to $n$?''\end{minipage} \end{center} A few decades later, Leonhard Euler~\cite{Euler1,Euler2} published an analytical study of the coefficients of the polynomial~\eqref{sans}. The elementary
properties of the array generated by these coefficients closely mimic
those of binomial ones. For example, each entry in the
body of the (centered) triangle is the sum of the $m$ entries
above it, extending a well-known property of Pascal's triangle.
This array, denoted in this paper by $\T$, is termed the
\emph{extended Pascal triangle}~\cite{Boll3}, or Pascal-T
triangle~\cite{Turner}, or Pascal-De Moivre triangle~\cite{larry}. In 1937, it was reintroduced by A.~Tremblay~\cite{tremblay} and in 1942 by P.~Montel~\cite{montel},
and explicitly discussed by John Freund in 1956~\cite{Freund}, as arising in the
solution of a restricted occupancy problem. In fact, the
coefficient of $t^n$ in \eqref{sans}, denoted by George Andrews $\enom{k}{n}{\, m}$~\cite{Andrews1}, is the number of distinct ways in which $n$ unlabeled objects can be distributed in $k$ labeled boxes allowing at most
$m$ objects to fall in each box (see also the Riordan monograph~\cite[p.104]{riordan1}). In statistical
physics, the Freund restricted occupancy model is known as the Gentile intermediate statistics, called after Giovanni Gentile Jr~\cite{gent1,gent2}. This model interpolates between Fermi-Dirac statistics (binomial case :  $m=1$) and Bose-Einstein statistics ($m=\infty$).  It will be referred here to as \emph{Gentile-Freund statistics}
(GFS).

The arrays $\T$ have been extensively used  in reliability and probability
studies \cite{balasu, daren,sen,makri}. Several results about the extended binomial coefficients, specially the trinomial
ones ($m=2$), are known~\cite{Boll3,Bankier,Boll1,Boll2,Boll4,Bonda,Fielder}. Some
generalizations have been discussed as well. For instance, Ollerton and Shannon~\cite{ollerton} have investigated
various properties and applications of a generalization of the
triangle $\T$ by extending the Freund occupancy model.
We underline in passing that the entries of $\T$ have been
$q$-generalized by George Andrews and Rodney J. Baxter~\cite{Andrews3} for $m=2$ to solve the hard hexagon model
in statistical mechanics and later by Warnaar~\cite{warnaar}
for arbitrary $m$. This $q$-analog proved to have a deep
connection with the Rogers-Ramanujan identities and the theory of
partitions~\cite{Andrews1,Andrews2}.

The Pascal-De Moivre triangles $\mathsf{T}_{\hskip-1pt 2}$ (which begins as shown in Figure~\ref{fig}),
$\mathsf{T}_{\hskip-2pt 3}$ and $\mathsf{T}_{\hskip-2pt 4}$ are
recorded in Sloane's \emph{Online Encyclopedia of Integer Sequences}~\cite{sloane} as A027907, A008287 and
A035343 respectively.

\begin{figure}
\footnotesize{\[ \begin{array}{ccccccccccccc}
  &  &  &  &  &  &1  &  &  &  &  & & \\
  &  &  &  &  & 1 & 1 & 1 &  &  &  &  &\\
  &  &  &  & 1 & 2 & 3 & 2 & 1 &  &  & & \\
  &  &  & 1 & 3 & 6 & 7 & 6 & 3 & 1 &  &  &\\
 &   & 1 & 4 & 10 & 16 & 19 & 16 & 10 & 4 & 1 & & \\
  & 1 & 5 & 15 & 30 & 45 & 51 & 45 & 30 & 15 & 5 & 1&\\
 \iddots &\vdots & \vdots & \vdots & \vdots & \vdots & \vdots & \vdots & \vdots & \vdots & \vdots & \vdots & \ddots
 \end{array}\]}
 \\
  \caption{The centered triangle $\mathsf{T}_{\hskip-1pt 2}$}\label{fig}
\end{figure}

\subsection{Preliminaries} Let us fix our terminology and notations.
\begin{definition} \label{def1}
let $\a$ be a sequence of $m+1$ numbers $(a_0,a_1,\ldots
,a_{m})$ and let $p_a(t)=\sum_{i=0}^m a_i t^i$ be its generating polynomial. The \emph{polynomial coefficients}
associated with the vector $\a$ are defined by\footnote{We make
use of the conventional notation for coefficients of entire series
: $\left[t^n\right]\sum_i a_i t^i~:= a_n$.} \be \label{pnom}  \pnom{k}{n}{\a} \stackrel{\mbox{\tiny def}}=  \left\{
                                                     \begin{array}{ll}
                                                       [t^n]\left(p_\a(t)\right)^k, & \hbox{if} \quad 0 \leq n \leq mk \\
                                                       0, & \hbox{if} \quad n <0 \; \; \hbox{or} \; \; n > mk
                                                     \end{array}
                                                   \right.\ee where we
have used a vector-indexed binomial symbol mimicking the notation
of Andrews. When $a_i=1$ for all $i$, the binomial symbol will be
simply indexed by $m$. The array of polynomial coefficients will
be called \textit{$\a$-polynomial triangle} or \textit{$\a$-triangle} for short, and denoted by $\mathsf{T}(\a)$.
Rows are indexed by $k$ and columns by $n$. The polynomial triangle $\mathsf{T}(\a)$ will be
called \emph{arithmetical} or {\em combinatorial} if the coefficients $a_i$  are
integers.
\end{definition}
The term ``polynomial coefficients'' is inspired by the designation of Louis Comtet ~\cite[p.78]{Comt} for $\T$. Though apparently shallow, the map $\a \mapsto \mathsf{T}(\a)$ defined by~\eqref{pnom} has quite nontrivial properties which relate to ``deeper'' mathematics. We stress that definition~\ref{def1} appears in~\cite{kallos} as a
consequence of ``a generalization of Pascal's triangle using powers of base
numbers''. We quote also in this context the work of Noe~\cite{noe} who studied in some
detail the central coefficient in $(1+bt+ct^2)^n$ for integers
$b,c$.

By application of the multinomial formula, we see that the polynomial coefficients are homogeneous polynomials of
degree $k$ in the numbers $a_i$: \be \label{pnom1}\pnom{k}{n}{\a}=k! \sum_{\mathbf{k} \, \in \, \mathcal{O}(k,n)}  \frac{\a ^\mathbf{k}}{\mathbf{k}!},\ee
where the sum is over the set $\mathcal{O}(k,n)$ of nonnegative integer vectors $\mathbf{k} = (k_0 ,.., k_m)$  subject to the constraints $k_{0}+ k_{1}+ \cdots+k_{m}=k $ and $k_{1}+2k_{2}+ \cdots +m k_{m}=n$, and where the following concise notations for powers and factorials of a vector are used \[\a^\mathbf{k} = \prod_{i=0}^{m}a_{i}^{k_{i}}, \qquad \mathbf{k}! = \prod_{i=0}^{m}k_i! .\]

We note the following points about the polynomial coefficients. Eq.~\eqref{pnom1} can also be viewed as a ``restricted'' Bell polynomial in the indeterminates $a_i$. In particular cases, \eqref{pnom1} can be expressed in terms of known orthogonal polynomials. For $m=2$, it can be written in terms of the so-called two-variable one-parameter Gegenbauer polynomials defined by $(\alpha-2xt+yt^2)^{-\lambda}=\sum_{n=0}^\infty C_n^{(\lambda)}(x,y;\alpha)t^n$. In particular, if $a_0a_2 >0$, ~\eqref{pnom} is a value of the ordinary (one-variable) ultraspherical polynomials~\cite[p.783]{HMF}:  \be \label{gegen} \pnom{k}{n}{\a}=
a_0^{k-n/2} a_2^{n/2} C_n^{(-k)}\left(-\frac{a_1}{2 \sqrt{a_0
a_2}}\right).\ee

A final point that needs to be addressed is that, with the exception of
the binomial triangle, $\mathsf{T}(\a)$ is a Riordan array only if $p_{\a}(0)=0$, i.e, $a_0=0$, as can be seen from the bivariate generating function :  \[ \sum_{k , n}
\pnom{k}{n}{\a} t^n u^k = \frac{1}{1-u p_{\a}(t)}. \] For basic informations on Riordan arrays, the reader is referred to~\cite{riordan}.

\subsection{Outline and Main results}
Literature on the triangles $\T$ remains quite sparse, and there is no systematic interest in their properties. In this research paper, we plan to fill this gap by presenting a unified approach to the subject. A more ambitious program is to extend all the known properties of the ubiquitous binomial coefficients as proposed by Comtet~\cite{Comt}.

The following items give a sample of our results :
\begin{itemize}
  \item Besides the weighted Gentile-Freund model, we give in Section~\ref{s2} three combinatorial models enumerated by polynomial coefficients : \begin{enumerate}
                                                                                                                                                  \item we show that polynomial coefficients count the number of points a player can score in a game of drawing colored balls (Theorem~\ref{score}). We discuss, by the way, a variant of the old problem of De Moivre and rediscover that polynomial coefficients provide a natural extension of the usual binomial probability distribution based on Bernoulli trials with more than two outcomes.
                                                                                                                                                  \item we give, via a bijective proof, an interpretation of~\eqref{pnom} as number of specified colored directed lattice paths (Theorem~\ref{lattice}).
                                                                                                                                                  \item we propose an interpretation in terms of spin chain systems (Proposition~\ref{spin}).
                                                                                                                                                \end{enumerate}
We discuss, also in this section, some important examples of arithmetical polynomial triangles related to restricted occupancy models.
  \item In Section \ref{s3}, we give extensions of several binomial identities (Table~\ref{tab1}, Identities~\ref{id4},~\ref{squares}), and investigate some algebraic properties of the mapping $\a \mapsto \mathsf{T}(\a)$. The techniques are elementary and the proofs are straightforward, but the results seem interesting in their own right.
  \item In Section \ref{s5}, we recursively calculate generating functions of the column sequences of $\mathsf{T}(\a)$ (Proposition~\ref{cgf}). Interesting corollaries follow from these recurrence relations such as new formulae for the Fibonacci numbers (Corollary~\ref{fibo}) and Hermite polynomials (Corollary~\ref{hermite}) in terms of trinomial coefficients.
  \item We introduce in Section \ref{s6} the notion of entropy density function in the thermodynamical limit (that is when $k$ and $n$ go to infinity and the ratio $n/k$ is fixed) and study its properties (Theorem~\ref{ent}).
\end{itemize}

\section{ Combinatorial interpretations}\label{s2}
Besides GFS, we give in this section three combinatorial models
enumerated by arithmetical polynomial coefficients ($a_i \in \mathbbm{N}$). Our proofs are mainly bijective. To do this, we begin by presenting our own approach to the GFS.

\subsection{Restricted occupancy model} \label{model1} Consider a ball-in-box model in which $n$ \emph{undistinguishable}
balls (particles) are distributed among $k$ \emph{distinguishable} boxes (states).
Without restrictions on the boxes's occupancies, this model is known
as Bose-Einstein statistics. If one object at most is
allowed to occupy a box, the model  is called Fermi-Dirac statistics. In
the more general problem, there is a maximum and minimum number of
balls that any box can contain. Restricted occupancy models have
many applications~\cite{charala}. For instance, the Gentile-Freund model, where one allows
at most $m$ balls to fall in each box ($m\geq 1$), has been applied to the analysis
of socioeconomic and transport systems (see, e.g,~\cite{kapur}).

Assume a configuration in which $k_{i}$ boxes among $k$ ones are
occupied by $i$ balls, for each $i=1, \ldots m$. The number of
vacant boxes is obviously $k_{0}=k-\sum_{i=1}^{m}k_{i}$ and the
total number of balls is $n=\sum_{i=1}^{m}i k_{i}$. Then there are
${{k} \choose k_0, k_1, \ldots, k_m}=\nicefrac{k!}{k_{0}!k_{1}! \cdots
k_{m}!}$ ways to realize a configuration. The total number of
ways to put the $n$ balls in the boxes is obtained by summing over all
$(m+1)$--tuples of non-negative integers
$(k_{0},k_{1},\cdots,k_{m})$ subject to $\sum_{i=0}^mk_i=k$ and $\sum_{i=1}^m ik_i=n$. Then by using the
multinomial formula, one easily finds that the ordinary generating
function of this number is the $k$--th power of the polynomial
$1+t+t^2+ \cdots +t^m$. That is the polynomial coefficients
associated with the vector $a_i=[i \leq m]$\footnote{Throughout the paper, we use the Iverson bracket notation to indicate that $[P]=1$ if the proposition $P$ is true and 0 otherwise.}. These coefficients have well-known properties~\cite{Boll1,Bonda,Freund}.

As is clear from~\eqref{pnom1}, the polynomial coefficients associated with a general vector $\a$
count the total number of distributions of the balls in the above
restricted occupancy model; but each configuration is now weighted
by the monomials $\a^{\mathbf{k}}$.

We note, by the way, a more elegant form of~\eqref{pnom1} in terms
of weighted restricted partitions of $n$ :

\begin{lemma} \label{prop1} The polynomial coefficient~\eqref{pnom} can be written
as \be \label{t} \pnom{k}{n}{\a}=\sum_{{\lambda \vdash n \atop
l(\lambda')\leq m}} a_0^{k-l(\lambda)} h(\lambda)w_{\a}(\lambda) {k \choose
l(\lambda)},\ee where $\lambda \vdash n$ indicates that the sum
runs over all partitions of $n$ : $\lambda = (1^{k_{1}}2^{k_{2}} \ldots \, m^{k_{m}})$ whose greatest part do not exceed $m$,
symbolized by $l(\lambda')\leq m$, $\lambda'$ being the conjugate
partition of $\lambda$; $h$ is the function \[
h(\lambda)={{l(\lambda)}\choose k_{1},k_{2}, \cdots ,k_{m}}, \]
$l(\lambda)= \sum_{i=1}^{n}k_{i}$ is the length of the partition
$\lambda$. $w_{\a}$ is a function that assigns to $\lambda$ the
weight \[ w_{\a}(\lambda) =
\prod_{i=1}^m a_i^{k_i}. \]
\end{lemma}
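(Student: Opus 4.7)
The plan is to deduce the claimed formula directly from the multinomial expansion~\eqref{pnom1} by relabeling the summation variables as integer partitions. First, I would observe that each vector $\mathbf{k}=(k_0,k_1,\ldots,k_m)\in\mathcal{O}(k,n)$ is uniquely determined by its last $m$ coordinates $(k_1,\ldots,k_m)$, since the first constraint forces $k_0=k-\sum_{i=1}^m k_i$. Nonnegative integer tuples $(k_1,\ldots,k_m)$ satisfying $\sum_{i=1}^m i\,k_i=n$ are then in obvious bijection with partitions $\lambda$ of $n$ whose parts are at most $m$ (equivalently, $l(\lambda')\leq m$): one reads off $k_i$ as the multiplicity of $i$ in $\lambda=(1^{k_1}2^{k_2}\cdots m^{k_m})$.

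Next, I would translate each ingredient of~\eqref{pnom1} into partition language. The power $\a^{\mathbf{k}}=a_0^{k_0}\prod_{i\ge 1}a_i^{k_i}$ becomes $a_0^{k-l(\lambda)}\,w_{\a}(\lambda)$, since $l(\lambda)=\sum_{i=1}^m k_i$ and $w_{\a}(\lambda)=\prod_{i\ge1}a_i^{k_i}$ by definition. For the multinomial factor, I would split the factorials using $k_0+l(\lambda)=k$:
\[
\frac{k!}{\mathbf{k}!}=\frac{k!}{k_0!\,l(\lambda)!}\cdot\frac{l(\lambda)!}{k_1!\,k_2!\cdots k_m!}={k\choose l(\lambda)}\,h(\lambda).
\]
Substituting these two rewritings into~\eqref{pnom1} and noting that the resulting summand depends on $\mathbf{k}$ only through the associated partition $\lambda$ yields precisely the claimed expression.

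There is no serious obstacle here: the lemma is essentially a cosmetic repackaging of~\eqref{pnom1} obtained by stratifying the sum according to which partition of $n$ each admissible index vector encodes. The only point requiring a moment's thought is the equivalence ``largest part of $\lambda$ is at most $m$'' $\Leftrightarrow$ ``$l(\lambda')\le m$'', which is immediate from the definition of the conjugate partition. Its value lies not in depth but in convenience, since subsequent arguments can now proceed partition by partition, with the dependence on the vacant-box count $k_0$ cleanly absorbed into the prefactor $a_0^{k-l(\lambda)}$.
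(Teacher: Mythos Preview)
Your proof is correct and follows essentially the same approach as the paper: both identify the index vectors $\mathbf{k}\in\mathcal{O}(k,n)$ with partitions $\lambda=(1^{k_1}\cdots m^{k_m})$ of $n$ having parts at most $m$, eliminate $k_0=k-l(\lambda)$, and factor the multinomial coefficient as $\binom{k}{l(\lambda)}\,h(\lambda)$. The paper phrases this in the combinatorial language of box occupancies and Ferrers diagrams, whereas you present it as a direct algebraic relabeling of~\eqref{pnom1}, but the substance is identical.
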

\begin{proof}
Identify a configuration in which $k_{i}$ boxes among $k$ ones are
occupied by $i$ balls, $i=0, \ldots m$, with a restricted
partition $\lambda=(1^{k_{1}}2^{k_{2}} \cdots
\, m^{k_{m}})$ of the total number of balls $n=\sum_{i=1}^m i k_i$. To such a partition, one can attach a Ferrers
diagram where the number $k_{i}$ represents the multiplicity of
rows with $i$ dots and the length of the first row is less than or
equal to $m$. Then the sum in~\eqref{pnom1} becomes over all
restricted partitions of $n$. The expression~\eqref{t} follows
by replacing $k_0$ by $k-l(\lambda)$ and rewriting the multinomial
coefficient as $h(\lambda) {k \choose l(\lambda)}$. \end{proof}

Regrouping the terms with $l(\lambda)=i$, we get the following useful form \be \label{Tbis} \pnom{k}{n}{\a}=\sum_{i=\lceil \frac{n}{m}\rceil}^n a_0^{k-i} \alpha_{n,i} {k \choose i}, \ee where \be \label{alph} \alpha_{n,i}=\sum_{{\lambda \vdash n\; ,\; l(\lambda') \leq m \atop l(\lambda)=i}} h(\lambda) w_{\a}(\lambda)=\pnom{i}{n-i}{\a_1},\ee and  $\a_1=(a_1, \ldots ,a_m)$.

We can interpret the summands in the formula~\eqref{t} as
follows~: the binomial coefficient is the number of ways to
collect $l(\lambda)$ non-vacant boxes among $k$ ones. A
configuration $\lambda$ being fixed, this number should be
multiplied by the number $h(\lambda)$ of ways to arrange $k_1$
boxes with 1 ball, $k_2$ boxes with 2 balls, \ldots, $k_m$ boxes with
$m$ balls in a sequence of length $l(\lambda)$. The result is
finally weighted by the monomial $w_{\a}(\lambda)$ which consists
of the product of the weights of every row in the Ferrers
diagram~: each row with $i$ dots has $a_i$ colors, including the
$k_0$ ``empty'' rows.

\begin{remark} The number of permitted configurations is the number of
partitions of $n$ which fit inside a $k \times m$ rectangle. It is
given by the coefficient of $q^n$ in the gaussian polynomial
$\left[ {k+m \atop k} \right]_q$~\cite{Andrews4}.  To illustrate, let us consider 5 balls to be thrown into 4 boxes
with maximal occupancy 3. There are $[q^5]\hskip-4pt\left[
{7 \atop 4} \right]_q = 4$ possible configurations : $(2 3), (1^2  3), (1  2^2), (1^3  2)$. Summing the contributions of these configurations yields $\pnom{4}{5}{\,\a}=12 a_0^2 a_2 a_3 +12 a_0 a_1^2 a_3 +12 a_0 a_1 a_2^2+4 a_1^3 a_2 $. \end{remark}

For non-negative integers $a_i$, the interpretation of
\eqref{pnom} suggests that $a_i=\enom{1}{i}{\a}$ can
be regarded as the number of ways to throw $i$ balls in one box.
To make this proposal plausible, one could consider that the
interior of each box is discretized, i.e., balls live in
cells whose occupation can be either single or multiple. This
intrinsic ``one-box structure'' was proposed by
Fang~\cite{fang1,fang2} who, in an essentially probabilistic
approach to the Gentile-Freund model, considered the case where
each box contains $m$ cells and the balls are assigned to the boxes
in such a way that no cell can accommodate more than one ball.
Obviously, according to this picture, if $a_j=0$ for some index
$j$, then $j$ balls are not allowed to lodge together in a same box.
By reason of the above interpretation, the vector $\a =(a_0,a_1,
\ldots, a_m)$ will be called the \emph{color vector} of the
triangle $\mathsf{T}(\a)$.

\subsection{Score in drawing colored balls} \label{model2}
Let us consider the following game of chance : suppose that a box
contains $N$ balls labeled by numbers from 0 to $m$ and assume we
have $a_i$ balls with label (or color) $i$, $N=\sum_{i=0}^m a_i$. A
ball is repeatedly drawn at random and put back in the box, with all balls having equal
chances of being chosen at any time. Suppose that, in each trial, the capital of a player is increased by
$j$ when ball number $j$ shows up with probability $a_j/N$.
Let $g_i$ denote the gain in the $i$-th trial and let $G_k=g_1+g_2+
\cdots +g_k$ be the partial gain at time $k$. The probability
generating function of the random variable $G_k$ is \[
\frac{1}{N^k}\left(a_0+a_1
t+a_2 t^2+\cdots+a_{m}t^{m}\right)^k .\] So, its probability mass
function reads \be \label{proba}
\mathbb{P}(G_k=n)=\frac{1}{N^k}\pnom{k}{n}{\a}=
\frac{1}{\sum_{i=0}^{mk} \pnom{k}{i}{\a}}\pnom{k}{n}{\a}\; , \quad
n=0,1 \ldots ,mk. \ee where $\a=(a_0,a_1,\ldots,a_m)$. This is the
probability that the player shows $n$ points after $k$ draws. Note
that the denominator in~\eqref{proba}, i.e, the total number of
possibilities, is the $k$-th row sum in the $\a$-triangle. Since,
moreover, the space of all possibilities is endowed with the uniform
probability measure, we have
\begin{theorem} \label{score}
Let $\a=(a_0,a_1,\ldots,a_m) \in \mathbbm{N}^{m+1}$. The polynomial
coefficient associated with $\a$ is the number of
ways to record $n$ points after $k$ trials in the above described
drawing game, the integer $a_j$ being the number of balls of color
$j$.
\end{theorem}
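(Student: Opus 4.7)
The plan is to translate the probability calculation that immediately precedes the theorem into a counting statement, using the elementary observation that a uniform probability becomes a cardinality upon multiplying by the size of the sample space. First I would make the sample space explicit: a play of the game is an ordered $k$-tuple $(b_1,\ldots,b_k)$ of balls drawn with replacement from the urn of $N=\sum_{i=0}^{m}a_i$ labelled balls; there are $N^k$ such tuples, each realized with equal probability $N^{-k}$.

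Next I would confirm the probability generating function computation. Since the ball labelled $j$ is drawn in a single trial with probability $a_j/N$, the PGF of one trial is
\[
\mathbb{E}\!\left[t^{g_i}\right]=\frac{1}{N}\sum_{j=0}^{m}a_j\,t^j=\frac{p_\a(t)}{N},
\]
and independence of the $k$ draws gives $\mathbb{E}[t^{G_k}]=N^{-k}p_\a(t)^k$. Reading off the coefficient of $t^n$ yields $\mathbb{P}(G_k=n)=N^{-k}\pnom{k}{n}{\a}$. Because the probability measure on the $N^k$ ordered outcomes is uniform, the number of sequences of draws producing the score $n$ equals $N^k\cdot\mathbb{P}(G_k=n)=\pnom{k}{n}{\a}$, which is exactly the claim.

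In keeping with the paper's bijective philosophy, I would also sketch a probability-free version by expanding
\[
p_\a(t)^k=\prod_{i=1}^{k}\sum_{j_i=0}^{m}a_{j_i}\,t^{j_i}=\sum_{(j_1,\ldots,j_k)\in\{0,\ldots,m\}^k}\left(\prod_{i=1}^{k}a_{j_i}\right)t^{j_1+\cdots+j_k},
\]
and interpreting a term $\prod_i a_{j_i}$ with $\sum_i j_i=n$ as the number of ways of choosing one physical ball among the $a_{j_i}$ of color $j_i$ in the $i$-th trial; grouping by the total exponent $n$ recovers $\pnom{k}{n}{\a}$ directly. The main obstacle is essentially expository rather than mathematical: the theorem is a one-line corollary of the PGF derivation preceding it, and the only real choice is whether to route the argument through probability (which ties cleanly to the motivating game) or through the direct expansion of $p_\a(t)^k$ (which makes the combinatorial content transparent).
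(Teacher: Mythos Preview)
Your proposal is correct and follows essentially the same route as the paper: the argument there is precisely that the PGF of $G_k$ is $N^{-k}p_\a(t)^k$, hence $\mathbb{P}(G_k=n)=N^{-k}\pnom{k}{n}{\a}$, and then the uniform probability on the $N^k$ outcomes converts this into the desired count. Your added direct expansion of $p_\a(t)^k$ is a welcome clarification but not a different approach---it simply unpacks the same computation without the probabilistic language.
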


Obviously, if $m=1$, the variables $g_i$ are $(0,1)$ Bernoulli random variables
and therefore $G_k$ has binomial distribution. In this sense, the
probability mass function~\eqref{proba} is seen as a generalization of the
binomial distribution based on trials with more than 2 outcomes, provided
the variables $g_i$ are not decomposable to Bernoulli ones.

As noted in the introduction, the colorless version of the distribution ~\eqref{proba} was first arrived at by De Moivre. In the middle of the twentieth century, it was restudied by Steyn as the limit of a generalization of the
hypergeometric distribution~\cite{steyn}. It was also investigated by the authors of~\cite{pana}, where the distribution is termed  ``cluster binomial distribution'' because of the multitude of outcomes for one trial. It was also studied in detail by the authors of\cite{balasu}.

\subsection{Directed lattice paths}
In probability theory, it is well-known that the evolution of sums
of independent discrete random variables, like that of the last
model, can be described by \emph{lattice paths}. We plan to make use
of this fact to propose the third model.

In this section, by a directed lattice path we mean a polygonal
line of the discrete Cartesian half plane $\mathbbm{N} \times
\mathbbm{Z}$ whose ``direction of increase'' is the horizontal axis
and the allowed steps are simple, i.e, of the form $(1,s)$ with $s \in
\mathbbm{Z}$~\cite{Bander}.

We have the following combinatorial interpretation of the polynomial coefficients :

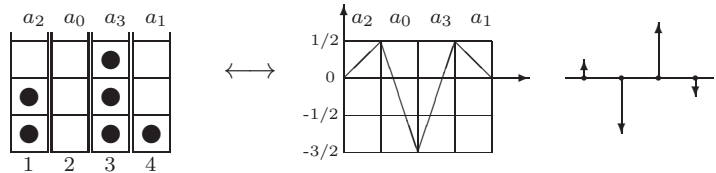
\begin{figure}
\begin{center}
  \setlength{\unitlength}{0.7pt}
\begin{picture}(280,100)(0,-10)

\put(-40,0){\line(0,1){65}}
\put(-20,0){\line(0,1){65}}
\put(-30,10){\circle*{10}}
\put(-30,30){\circle*{10}}
\multiput(-40,0)(0,20){4}{\line(1,0){20}}
\put(-37.5,70){\scriptsize{ $a_2$}}
\put(-37.5,-10){\scriptsize{ $1$}}
\put(-18,0){\line(0,1){65}}
\put(2,0){\line(0,1){65}}
\multiput(-18,0)(0,20){4}{\line(1,0){20}}
\put(-15.5,70){\scriptsize{ $a_0$}}
\put(-15.5,-10){\scriptsize{ $2$}}
\put(4,0){\line(0,1){65}}
\put(24,0){\line(0,1){65}}
\put(14,10){\circle*{10}}
\put(14,30){\circle*{10}}
\put(14,50){\circle*{10}}
\multiput(4,0)(0,20){4}{\line(1,0){20}}
\put(6.5,70){\scriptsize{ $a_3$}}
\put(6.5,-10){\scriptsize{ $3$}}
\put(26,0){\line(0,1){65}}
\put(46,0){\line(0,1){65}}
\put(36,10){\circle*{10}}
\multiput(26,0)(0,20){4}{\line(1,0){20}}
\put(28.5,70){\scriptsize{ $a_1$}}
\put(28.5,-10){\scriptsize{ $4$}}
\put(75,40){$\longleftrightarrow$}
\thinlines\put(140,40){\vector(1,0){100}}
\put(140,0){\vector(0,1){80}}
\linethickness{0.001mm}
\multiput(140,0)(20,0){5}{\line(0,1){60}}
\multiput(140,0)(0,20){4}{\line(1,0){80}}
\put(130,38){\tiny{0}}
\put(122,58){\tiny{1/2}}
\put(118,18){\tiny{-1/2}}
\put(118,-2){\tiny{-3/2}}
\linethickness{2mm}
\put(140,40){\line(1,1){20}}
\put(160,60){\line(1,-3){20}}
\put(180,0){\line(1,3){20}}
\put(200,60){\line(1,-1){20}}
\put(140,70){\scriptsize{ $a_2$}}
\put(160.5,70){\scriptsize{ $a_0$}}
\put(182.5,70){\scriptsize{ $a_3$}}
\put(204.5,70){\scriptsize{ $a_1$}}
\linethickness{0.1mm}
\put(260,40){\line(1,0){80}}
\put(270,40){\vector(0,1){10}}
\put(290,40){\vector(0,-1){30}}
\put(310,40){\vector(0,1){30}}
\put(330,40){\vector(0,-1){10}}
\put(270,40){\circle*{3}}
\put(290,40){\circle*{3}}
\put(310,40){\circle*{3}}
\put(330,40){\circle*{3}}
\end{picture}
\end{center}
\caption{Illustration of the bijection~\eqref{bijection} for $m=3$. In this example, the shape is $(2,0,3,1)$ and the slops are $(\nicefrac{1}{2},-\nicefrac{3}{2},\nicefrac{3}{2},-\nicefrac{1}{2})$ .  The associated spin chain is also displayed.}
\label{fig2}
\end{figure}

\begin{theorem} \label{lattice}Let $\mathcal{S}^{(\a)}(k,n)$ denote the set of
lattice paths of length $k$ starting from the origin, ending in
the point with coordinates $(k,n-m k/2)$ and using the
steps $s_i=(1,i-m/2)$, $i=0,\ldots,m$ ; step $s_i$ coming in
$a_i$ colors. Then \[ \pnom{k}{n}{\a} = \# \, \mathcal{S}^{(\a)}(k,n).\]
\end{theorem}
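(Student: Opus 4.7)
The plan is to prove this via a direct bijection that unpacks the multinomial expansion of $p_{\a}(t)^k$ without collecting terms. Starting from the definition $\pnom{k}{n}{\a} = [t^n]\, p_{\a}(t)^k$ and distributing the product
\[
p_{\a}(t)^k = \prod_{\ell=1}^{k}\left(\sum_{i=0}^{m} a_i t^i\right),
\]
every monomial contributing to $[t^n]$ corresponds to choosing, for each factor $\ell \in \{1,\dots,k\}$, an index $j_\ell \in \{0,\dots,m\}$ subject to $j_1 + \cdots + j_k = n$, weighted by $\prod_\ell a_{j_\ell}$. Interpreting the integer weight $a_{j_\ell}$ as a number of colors, this shows
\[
\pnom{k}{n}{\a} = \#\bigl\{((j_1,\dots,j_k),(c_1,\dots,c_k)) : j_\ell \in \{0,\dots,m\},\ c_\ell \in \{1,\dots,a_{j_\ell}\},\ \textstyle\sum_\ell j_\ell = n \bigr\}.
\]

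Next I would exhibit the bijection with $\mathcal{S}^{(\a)}(k,n)$. To a pair $((j_\ell),(c_\ell))$ I associate the lattice path whose $\ell$-th step is $s_{j_\ell}=(1, j_\ell - m/2)$ in color $c_\ell$. This gives a lattice path of length $k$, using only allowed colored steps; conversely, any path in $\mathcal{S}^{(\a)}(k,n)$ records its step indices and colors in a unique such pair. The correspondence is evidently a bijection, so it only remains to check that the endpoint condition matches the sum condition $\sum_\ell j_\ell = n$.

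The endpoint is
\[
\left(k,\ \sum_{\ell=1}^{k}\bigl(j_\ell - \tfrac{m}{2}\bigr)\right) = \left(k,\ \sum_{\ell=1}^{k} j_\ell - \tfrac{mk}{2}\right),
\]
which equals $(k, n - mk/2)$ precisely when $\sum_\ell j_\ell = n$. Combining this with the bijection yields $\#\,\mathcal{S}^{(\a)}(k,n) = \pnom{k}{n}{\a}$.

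There is no real obstacle here: the only subtlety is ensuring that the ``uncollected'' multinomial expansion is used, so that each position $\ell$ carries its own independent step-and-color choice rather than grouping equal steps together as in Lemma~\ref{prop1}. Once one commits to this viewpoint, the shift by $-m/2$ in the vertical coordinates is exactly the device that turns the exponent $j_\ell$ into a signed step, and the bijection becomes tautological. The figure accompanying the statement should make the identification of each factor $a_{j_\ell} t^{j_\ell}$ with a colored step $s_{j_\ell}$ transparent.
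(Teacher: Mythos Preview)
Your proof is correct and follows essentially the same bijection as the paper: both map the $\ell$-th index $j_\ell$ (the paper calls it $n_\ell$, the occupancy of box $\ell$) to the step $(1,j_\ell-m/2)$ with its color, and both verify the endpoint by summing $\sum_\ell(j_\ell-m/2)=n-mk/2$. The only difference is that the paper routes through the previously established GFS occupancy-shape interpretation to obtain the colored tuples $((j_\ell),(c_\ell))$, whereas you obtain them directly and more self-containedly from the uncollected expansion of $\prod_{\ell=1}^{k}\sum_i a_i t^i$.
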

\begin{proof} We will set up a bijection from the set of occupancy shapes of the GFS
onto the set $\mathcal{S}^{(\a)}(k,n)$. Consider $k$ boxes numbered
from 1 to $k$ and arranged from left to right in ascending order as
illustrated in Figure~\ref{fig2} for $k=4$. Assume box n$^{\circ} i$
is occupied by $n_i$ balls. The occupancy shape of this
configuration is the $k$-tuple $(n_1, \ldots, n_k)$;
$n_1+\ldots+n_k=n$ and $n_i \leq m$. An occupancy shape can then be
regarded as a restricted composition of $n$. We construct our
bijection as follows. With the shape $(n_1, \ldots, n_k)$ we
associate a lattice path of $k$ steps starting from the origin in
such a way that to box n$^{\circ} i$ we assign bijectively a simple
step $s_i$ with slope $n_i-m/2$: \be \label{bijection} (n_1,
\ldots, n_k) \leftrightarrow \left(s_1, \ldots, s_k\right), \quad
s_i=\left(1,n_i-m/2\right).\ee Because box n$^{\circ} i$ can have $a_i$
colors, the corresponding step $s_i$ can appear with so many
incarnations. Moreover, since in a given configuration $\lambda
=(0^{k_0}1^{k_1} \cdots m^{k_m})$, $k_i$ boxes accommodate $i$
balls, there are $k_i$ steps $s_i$ in the corresponding lattice
path. Thus, the latter ends in the altitude $\sum_{i=1}^k (n_i-m/2)=n-m k/2$. This ends the proof.\end{proof}

 If $mk$ is even then there exist a path that ends in the $x$-axis, i.e, a bridge if we use the terminology of~\cite{Bander} (cf. Figure~\ref{fig2}). This case concerns an occupancy model with half-filling, counted by  the central polynomial coefficient $\pnom{k}{\nicefrac{mk}{2}}{\; \a}$. For further lattice paths interpretations of central trinomial coefficients, see the study of David Callan~\cite{callan}.

\subsection{A spin chain model} Consider a chain of $k$ sites; each site is occupied by a particle with spin $\nicefrac{m}{2}$. The $m+1$ components of the spin runs over the set  $\{-\nicefrac{m}{2}, -\nicefrac{m}{2}+1, \ldots, \nicefrac{m}{2}\}$. As for the Ising model, define the ``\emph{magnetization}'' of a spin configuration of the system as the sum of spin projections divided by $k$ : \[ \frac{\hbox{sum of up spins} \uparrow - \,  \left|\hbox{sum of down spins} \downarrow \right|}{k}.\] Identifying the slope $n_i-m/2$ of the $i$-th step in the lattice path model with the spin projection $n_i-m/2$, as illustrated in Figure~\ref{fig2},  we have the following interpretation : \begin{proposition} \label{spin} The polynomial coefficient associated with the vector $\a$ is the number of spin configurations with magnetization $n/k-m/2$; spin projection $n_i-m/2$ comes in $a_i$ colors. \end{proposition}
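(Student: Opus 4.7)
The plan is to observe that Proposition~\ref{spin} is essentially a rephrasing of Theorem~\ref{lattice} in physical language, so the proof reduces to exhibiting an obvious bijection between spin configurations and the lattice paths of $\mathcal{S}^{(\a)}(k,n)$.

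First I would fix notation: a spin configuration of the chain is a tuple $(\sigma_1,\ldots,\sigma_k)$ with $\sigma_i\in\{-m/2,-m/2+1,\ldots,m/2\}$, and its magnetization is $M=\frac{1}{k}\sum_{i=1}^{k}\sigma_i$. Writing $\sigma_i=n_i-m/2$ with $n_i\in\{0,1,\ldots,m\}$, the condition $M=n/k-m/2$ becomes $\sum_i n_i=n$, i.e.\ the tuple $(n_1,\ldots,n_k)$ is a restricted composition of $n$ with parts at most $m$.

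Next I would define the bijection
\[
\Phi:(\sigma_1,\ldots,\sigma_k)\longmapsto\bigl(s_1,\ldots,s_k\bigr),\qquad s_i=(1,\sigma_i)=(1,n_i-m/2),
\]
and check that its image lies in $\mathcal{S}^{(\a)}(k,n)$: the path starts at the origin, has $k$ simple steps of the allowed form, and terminates at the point $\bigl(k,\sum_i\sigma_i\bigr)=(k,n-mk/2)$, matching the endpoint prescribed in Theorem~\ref{lattice}. The color assignment is preserved verbatim: by hypothesis, the spin projection $n_i-m/2$ carries $a_i$ colors, and so does the step $s_i$ in the lattice-path model. The map $\Phi$ is patently invertible, reading off the $i$-th spin from the vertical displacement of the $i$-th step together with its color.

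Finally I would invoke Theorem~\ref{lattice} to conclude $\#\mathcal{S}^{(\a)}(k,n)=\pnom{k}{n}{\a}$, giving the stated formula. There is no real obstacle here beyond the bookkeeping of the shift $\sigma_i\leftrightarrow n_i-m/2$; the only point worth flagging is that when $mk$ is odd, half-integer magnetizations occur naturally, and the endpoint of the associated path sits at a half-integer altitude, consistent with the convention already adopted in Theorem~\ref{lattice} and illustrated in Figure~\ref{fig2}.
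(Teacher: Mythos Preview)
Your proposal is correct and mirrors the paper's own justification: the paper simply remarks, in the sentence preceding the proposition, that identifying the slope $n_i-m/2$ of the $i$-th lattice step with the spin projection at site $i$ yields the interpretation, which is exactly the bijection $\Phi$ you spell out and then feed into Theorem~\ref{lattice}. No separate proof is given in the paper beyond that one-line identification, so your write-up is a faithful and more explicit version of the same argument.
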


The half-filling occupation discussed in the end of the last subsection concerns now the spin configuration with vanishing magnetization.
\subsection{Examples of combinatorial polynomial triangles}
Let us now discuss some instances of arithmetical polynomial triangles associated with specific color vectors.\begin{example}[\emph{Polynomial triangle associated with binomial coefficients}]  Let $a_i={m \choose i}$. Here, the polynomial coefficients reduce to
the binomial coefficients ${mk \choose n}$, i.e., the $n$ balls
are distributed into $m$ copies of $k$ boxes according to
Fermi-Dirac statistics. In this case, equation~\eqref{pnom1} leads to the binomial formula
\[ {mk\choose n}
  = \sum_{\scriptstyle{k_1+2k_2+\cdots+m k_m = n} \atop \scriptstyle{k_0+k_1+\cdots+ k_m = k}} \prod_{l=0}^m
    {\sum_{i=l}^m k_i \choose k_l} {m\choose l}^{k_l}. \] \end{example}
\begin{example}Let $a_i=a_i [p \leq i \leq m]$, $a_p \neq 0$. In this case we have weighted GFS where no boxes occupied by less than $p$ balls are
permitted. Here the sum~\eqref{t} reduces to a sum over
partitions such that $p \leq l(\lambda') \leq m$. The number of
possible ways is readily found to be \[
\pnom{k}{n}{\a}=\pnom{k}{n-kp}{\a_{p}} \quad \mbox{if} \quad kp \leq n
\qquad \mbox{and} \qquad  \pnom{k}{n}{\a} = 0 \quad \mbox{if} \quad
kp > n. \] where $\a_p = (a_p,\ldots,a_m)$. In particular, the
number of ways in which $n$ unlabeled objects can be distributed in
$k$ uncolored labeled boxes allowing at most $m$ objects and at
least $p$ objects to fall in each box, is $\enom{k}{n-k p}{\tiny{m-p}}$.  This is understandable, since we have to fill each box with $p$
objects to guarantee the minimum occupancy level and distribute the
remaining $n-k p$ ones among $k$ boxes allowing at most $m-p$
balls per box. We notice that for $p=1$, $\enom{k}{n-k}{m-1}$ is also the number of compositions of
$n$ with exactly $k$ parts, each less than or equal to $m$~\cite[p.55]{Andrews4}. \end{example}
\begin{example}[\emph{Restricted occupancy model with distinguishable balls}] Let $a_i=1/i! \; [0 \leq i \leq m]$. This case is the exponential version of the GFS: \[\pnom{k}{n}{\a}=[t^n]\left(e_m(t)\right)^k,\] where $e_m(t)$ is simply the $m$th section of the exponential series.. When $m=\infty$ the coefficient $n! \pnom{k}{n}{\a} = k^n$ is the number of ways in which $n$ \emph{distinguishable} balls can be thrown in $k$ distinguishable boxes (The so-called Maxwell-Boltzmann statistics). For finite $m$, the integer $ n!\enom{k}{n}{\a}$ enumerates the same occupancy model but with the restriction that no more than $m$ labeled balls can lodge in the same box. For $m=2$, the triangle is recorded as A141765.

 If $a_i=1/i! \; [1 \leq i \leq m]$, the integer  \[ n!\pnom{k}{n}{\a} = n! [t^n]\left(e_m(t)-1\right)^k ,  \quad (k \leq n \leq m k) \] is the statistical weight of the above restricted Maxwell-Boltzmann model allowing that no box is left unoccupied. For $m=\infty$, we have that \[\frac{n!}{k!} \pnom{k}{n}{\a}={n \brace k},\] ${n \brace k}$ being the Stirling numbers of the second kind. Therefore, for finite $m$, the integer $n!/k! \pnom{k}{n}{\a}$ is the number of ways of partitioning a set of  $n$ elements into  $k$ nonempty subsets with the restriction that no subset can contain more than $m$ elements. In the particular case $m=2$, we have \[ \frac{n!}{k!} \pnom{k}{n}{(0,1,{1\over 2})}=\frac{n!}{2^{n-k}(2 k-n)! (n-k)!} ,\]  which are the coefficients of the so-called Bessel polynomials : \[\sum_{n=k}^{2k}  \frac{n!}{k!} \pnom{k}{n}{(0,1,{1\over 2})}t^{n-k} =\sum_{i=0}^k\frac{(k+i)!}{(k-i)!i!}\,\left(\frac{t}{2}\right)^i = y_k(t).\] These numbers have been studied by Choi and Smith~\cite{choi1,choi2}.
\end{example}

\section{Polynomial coefficient Identities} \label{s3} Binomial coefficients satisfy an amazing plethora of dazzling identities. It is natural to seek their extensions to the polynomial case. In this section we demonstrate generalizations of some of the binomial coefficient identities.

\begin{table}
\small \begin{tabular}{lcc}
\hline \hline
\textbf{Identity} & \textbf{Binomial} & \textbf{Polynomial}\\
\hline &  &\\
Factorial expansion &  $\displaystyle{k \choose n}=\frac{k!}{n!(k-n)!}$ & Equation~\eqref{pnom1} \\
& & \\
Symmetry$^\dag$ &$\displaystyle\binom{k}{n}=\binom{k}{k-n}$ & $\displaystyle\displaystyle \pnom{k}{n}{\a}=\pnom{k}{m k-n}{\mathrm{J} \a}$ \\
& & \\
Absorption/Extraction  &$\displaystyle \binom{k}{n}=\frac{k}{n}\binom{k-1}{n-1}$ & $ \displaystyle \pnom{k}{n}{\a} = \frac{k}{n} \sum_{i=1}^m i a_i \pnom{k-1}{n-i}{\a}$\\
&  &\\
Vandermonde convolution  &$ \displaystyle \sum_{i+j=n}\binom{r}{i}
\binom{s}{j}= \binom{r+s}{n}$ & $\displaystyle
\sum_{i+j=n}\pnom{r}{i}{\a}
\pnom{s}{j}{\a}= \pnom{r+s}{n}{\a} $ \\
&  &\\
Addition/Induction  &$\displaystyle \binom{k}{n}=\binom{k-1}{n}+\binom{k-1}{n-1}$  & $\displaystyle \pnom{k}{n}{\a} = \sum_{i=0}^{m} a_i \pnom{k-1}{n-i}{\a}$ \\
& & \\
Binomial theorem  &$\displaystyle \sum_{n=0}^k \binom{k}{n} x^n y^{k-n}=(x+y)^k$  & $\displaystyle \sum_{n=0}^{m k} \pnom{k}{n}{\a} x^n y^{m k-n}=\left(\sum_{i=0}^m a_i x^i y^{m-i}\right)^k $ \\
& & \\
Upper summation$^\ddag$ &$\displaystyle \sum_{0 \leq k \leq p}{k \choose n}={p+1 \choose n+1}$ & $\displaystyle \sum_{0 \leq k \leq p} \frac{1}{a_0^k}\pnom{k}{n}{\a}=\sum_{i=\lceil \frac{n}{m}\rceil}^n \frac{1}{a_0^i} \alpha_{n,i} {p+1 \choose i+1}$\\
& &\\
Upper negation & $\displaystyle
\binom{k}{n}=(-1)^n \binom{n-k-1}{n}$ & $\displaystyle
\pnom{k}{n}{\a} = \sum_{i=0}^n
a_0^{k-i} (-1)^i \alpha_{n,i}\binom{i-k-1}{i} $  \\
&&\\
${\mbox{A recurrence relation} \atop \mbox{with respect to $n \geq 1$} }$& $\displaystyle \binom{k}{n}=\frac{k+1-n}{n} \binom{k}{n-1}$ & $\displaystyle \pnom{k}{n}{\a}=\frac{1}{n a_0} \sum_{l=1}^{m}\left((k+1)l-n\right) a_l \pnom{k}{n-l}{\a}$\\
&&\\
\hline \hline
\end{tabular}
\medskip
 \caption {Extensions of eight of the top ten binomial coefficients identities. \newline \indent  $^{\dag}$\, $\mathrm{J}$  is the $(m+1) \times (m+1)$ backward identity matrix, that is matrix with 1's on the anti-diagonal and 0's elsewhere.  \newline \indent $^{\ddag}$  The coefficients $\alpha_{n,i}$ are defined in~\eqref{alph}.}
\label{tab1}
 \end{table}
\subsection{Extension of Top Ten binomial identities}  In Table~\ref{tab1} we propose polynomial extensions of some of the top ten binomial identities displayed in table 174 of the ``concrete Mathematics'' by Graham, Knuth, and Patashnik~\cite{graham}. The proof of these generalizations is straightforward. \begin{proof}(sketch) The polynomial symmetry relation is readily established by writing that $p_{\mathrm{J}\a}(t)$ is the reciprocal polynomial of $p_\a(t)$ : \[p_{\mathrm{J}\a}(t)=\sum_{i=0}^m a_{m-i}t^i=t^{m}p_{\a}(t^{-1})\] and \[ \pnom{k}{mk-n}{\mathrm{J}\a}= [t^{mk-n}]t^{mk}p_\a(t^{-1})^k =[t^{-n}]p_\a(t^{-1})^k= \pnom{k}{n}{\a}.\] The Absorption/Extraction property follows by taking the derivative of both sides of $p_\a(t)^k=\sum_{n=0}^{mk} \pnom{k}{n}{\a}t^n$ with respect to $t$, and equating the coefficients of $t^n$. The Vandermonde convolution is usually obtained by equating coefficients on both sides of
$p_{\a}^{r+s}(t)=p_{\a}^r(t) p_{\a}^s(t)$. The Addition/ Induction relation is a particular case of Vandermonde convolution with $r=1$ and
$s=k-1$. The generalized binomial theorem is an obvious consequence of definition~\ref{def1}. To prove the generalized Upper summation and Upper negation identities, we apply the binomial upper summation and upper negation to the binomial coefficients in right-hand-side of~\eqref{Tbis}.  Finally, the recurrence with respect to $n\geq 1$ is a particular case of a recurrence for powers of Taylor series (see~\cite{gould1} and references therein). To prove it take logarithms of the equation $\left(p_\a(t)\right)^k=\sum_n \pnom{k}{n}{\a}t^n$ and differentiate with respect to $t$ and equate the coefficients of $t^n$ on both sides of the obtained equation~\cite{gould1}. \end{proof}

 \paragraph{\textbf{Remarks on the polynomial symmetry}.} Two points about the polynomial symmetry are worthy of note :
 \begin{itemize}
   \item If $\a$ is a palindrome, i.e, $a_i=a_{m-i} \; \forall \, i = 0, \ldots , m$, then $\a = \mathrm{J} \a$ and thus, through the generalized symmetry relation, the \textit{centered} triangle $\mathsf{T}(\a)$ is mirror symmetric across the median column (cf. Figure~\ref{fig}).
   \item In physics literature, the term ``holes'' is used to designate missing occupancies in
ball-in-box models and the notion of particle-hole duality implies that instead of studying particles, one can get similar information by
studying the holes. We infer that the generalized symmetry relation provides a
particle-hole duality. To see this, consider the restricted
occupancy model discussed in subsection~\ref{model1} and assume,
following Fang~\cite{fang1,fang2}, that each box contains
$m$ cells; no more than 1 ball can lodge in the same cell. For our
purpose, a particle is simply an occupied cell while a hole is
identified with an empty one. Therefore, it is clear that if $n$
balls (particles) are distributed among $k$ boxes (states), there are
$mk-n$ holes. According to this picture, we learn from the
symmetry relation that a system of $n$ particles governed by GFS
associated with the color vector $\a$ can equivalently be
described by $mk-n$ missing particles obeying GFS associated with
the color vector $\mathrm{J}\a$. This equivalence is just the
particle-hole duality. Particularly, if $\a$ is palindromic, (in this case the polynomial $p_\a$ is self-reciprocal)
particles and holes obey the same statistics. In the point of view of lattice paths, this duality acts as a simple reflection about the $x$-axis.
 \end{itemize}

 \subsection{More identities}
 \paragraph{\bf A pretty identity.} \be \label{id4} \sum_{l} \pnom{r}{p+l}{\mathrm{J} \a}
\pnom{s}{n+l}{\a} = \pnom{r+s}{m r-p+n}{\a}=\pnom{r+s}{m s+p-n}{\mathrm{J} \a}.\ee

\begin{proof} Follows from the symmetry relation and the application of
Vandermonde convolution. \end{proof}

If $p=n=0$, The identity~\eqref{id4} can be cast into the following matrix form
\[ \mathsf{T}(\mathrm{J} \a) \cdot  \mathsf{T}(\a)^{\mathrm{t}}= \mathsf{S}(\a)=\mathsf{S}(\mathrm{J} \a)^{\mathrm{t}},\] where $\mathsf{S}(\a)$ is the array whose $(r,s)$-entry  is $\pnom{r+s}{m r}{\a}$. Obviously, the matrix $\mathsf{S}(\a)$ is symmetric if $\a$ is a palindrome, and generalizes the familiar Pascal symmetric matrix. For instance, $\mathsf{S}((1,1,1))$ begins as
\[ \mathsf{S}((1,1,1))=\begin{pmatrix} 1 & 1 & 1
& 1 & 1 & 1 & \ldots \\1 & 3 & 6 & 10 & 15 & 21 & \ldots \\ 1 & 6 & 19
& 45 & 90 & 161 & \ldots \\ 1 & 10 & 45 & 141 & 357 & 784 & \ldots \\ 1 & 15 &
90 & 357 & 1107 & 2907 & \ldots
\\1 & 21 & 161 & 784 & 2907 & 8953 &\ldots\\ \vdots & \vdots & \vdots & \vdots & \vdots
& \vdots & \ddots \end{pmatrix}.\]

Putting in~\eqref{id4} $r=s=k$ and $p=n=0$, we get the identity :
\paragraph{\bf Sum of squares.} \emph{If $\a$ is a palindrome then} \be  \label{squares}\sum_{n=0}^{mk} \pnom{k}{n}{\a}^2
= \pnom{2k}{m k}{\a}. \ee
This identity generalizes the well-known formula $\sum_{n=0}^{k} \binom{k}{n}^2
= \binom{2k}{k}$,~\cite[p.78]{benj}.

\subsection{Some algebraic facts about the mapping $\a \mapsto \mathsf{T}(\a)$}
Now we prove some general algebraic properties.
\begin{fact}  \emph{Product of two polynomial triangles}. As infinite matrices, the
product of two polynomial triangles is a polynomial triangle,
i.e., \be \label{id3} \mathsf{T}(\a) \cdot \mathsf{T}(\b)=\mathsf{T}(\a \circ \b)\ee where for $\a =(a_0,\ldots,a_m)$,
and $\b=(b_0,\ldots,b_p)$,  $\a \circ\b$ is the $(mp+1)$-vector whose $i$-th component is given by
\be\label{op} (\a \circ\b)_i=\sum_{j=0}^m a_j \pnom{j}{i}{\,\b}, \quad i=0, \ldots ,mp.\ee
\end{fact}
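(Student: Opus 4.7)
The plan is to verify the identity by translating the matrix product into a generating function composition. The key observation is that the vector $\a \circ \b$ defined by \eqref{op} is designed precisely so that $p_{\a \circ \b}(t)$ equals the polynomial composition $p_\a(p_\b(t))$; this is where the proof really lives.

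First I would compute the generating polynomial of $\a \circ \b$. Starting from \eqref{op}, swapping the order of summation, and recognizing the inner sum as a power of $p_\b$:
\begin{equation*}
p_{\a \circ \b}(t) = \sum_{i=0}^{mp} (\a \circ \b)_i \, t^i = \sum_{j=0}^m a_j \sum_{i=0}^{mp} \pnom{j}{i}{\b} t^i = \sum_{j=0}^m a_j \, p_\b(t)^j = p_\a\bigl(p_\b(t)\bigr).
\end{equation*}
The degree is $mp$, so $\a \circ \b$ indeed has $mp+1$ components as claimed.

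Next, I would compute the $(k,n)$-entry of the right-hand side. Raising the above identity to the $k$-th power and extracting the coefficient of $t^n$, one gets
\begin{equation*}
\pnom{k}{n}{\a \circ \b} = [t^n]\, p_\a\bigl(p_\b(t)\bigr)^k = [t^n] \sum_{j=0}^{mk} \pnom{k}{j}{\a} p_\b(t)^j = \sum_{j=0}^{mk} \pnom{k}{j}{\a} \pnom{j}{n}{\b},
\end{equation*}
where the middle step applies Definition~\ref{def1} to $p_\a^k$, and the last step applies it again to $p_\b^j$. The right-hand side is exactly the $(k,n)$-entry of the matrix product $\mathsf{T}(\a)\cdot \mathsf{T}(\b)$, since the sum can be extended over all $j\geq 0$ using the vanishing convention in \eqref{pnom}.

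There is no real obstacle here; the only subtlety is bookkeeping about the ranges of the summation indices and checking that the vanishing convention $\pnom{k}{n}{\a}=0$ for $n<0$ or $n>mk$ makes the formal infinite matrix product well-defined (only finitely many terms are nonzero in each entry). I would close by remarking that \eqref{id3} identifies $\a \mapsto \mathsf{T}(\a)$ as an (anti-)morphism from the monoid of polynomials under composition to the monoid of lower-block matrices under multiplication, explaining why several identities in Table~\ref{tab1} admit a uniform matrix formulation.
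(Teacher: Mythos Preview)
Your proof is correct and follows essentially the same route as the paper: first establish $p_{\a \circ \b}(t)=p_\a(p_\b(t))$ from the definition of $\a\circ\b$, then raise to the $k$-th power and extract the coefficient of $t^n$ to identify the $(k,n)$-entry with the matrix product. The added remarks on index ranges and the monoid interpretation are fine but not required.
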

\begin{proof}The generating polynomial of $\a \circ \b$ is \[p_{\a \circ \b}(t)= \sum_{i=0}^{mp}\left(\sum_{j=0}^m a_j \pnom{j}{i}{\b}\right) t^i = \sum_{j=0}^m a_j \sum_{i=0}^{j p}\pnom{j}{i}{\b} t^i  = \sum_{j=0}^m a_j p_\b(t)^j=p_\a (p_\b(t)).\] Hence the $(k,n)$-entry  of $\mathsf{T}(\a \circ \b)$ can be written as \[\pnom{k}{n}{\a \circ \b} = [t^n]\left(p_\a(p_\b (t))\right)^k = \sum_{l=0}^{mk} \pnom{k}{l}{\a} [t^n] p_\b(t)^l =\sum_{l=0}^{mk}\pnom{k}{l}{\a}
\pnom{l}{n}{\b}. \] The last sum is exactly the $(k,n)$-entry of the array $\mathsf{T}(\a) \cdot \mathsf{T}(\b)$.
\end{proof} \noindent The formula~\eqref{id3} generalizes the identity $\sum_{l \geq 0}
{k \choose l}{l \choose n}= 2^{k-n} {k \choose n} $~\cite[p.78]{benj}.

The set of all vectors equipped with the operation~\eqref{op} is a monoid with identity element given by $\mathbf{e}=(0,1,0,\ldots)$; the triangle $\mathsf{T} (\mathbf{e})$ is the infinite identity matrix. The facts~\ref{fac2},~\ref{fac3},~\ref{fac4},~\ref{fac5} below are straightforward features of the binary operation~\eqref{op}. We leave their proofs as easy exercises for the reader.

\bfa \label{fac2} The operation $\circ$, which can be written in matrix form as
    \[ \a \circ \b =\mathsf{T}( \b)^\mathrm{t} \, \a , \] is associative and obviously linear with respect to the first argument, but not with respect to the second. However, one has the property
\[ \a \circ (\lambda \b)= (\mathrm{I}_\lambda \a) \circ \b,\] where, for every real number $\lambda$, $\mathrm{I}_\lambda = \mathrm{diag}(1,\lambda, \lambda^2, \ldots )$.
\efa
  \bfa \label{fac3} Let $\lambda$ be an arbitrary real number. Then
\bea \nonumber \mathsf{T}(\lambda \a)& =&\mathrm{I}_\lambda \cdot \mathsf{T}(\a).\\
\nonumber \mathsf{T}(\mathrm{I}_\lambda \a)&=&  \mathsf{T}(\a) \cdot \mathrm{I}_\lambda.\eea \efa

A particularly interesting case is that of binomial triangles ($m=1$) for which the map $\mathsf{T}(\cdot)$ is a group isomorphism:
\bfa \label{fac4}  The triangle $\mathsf{T}((a_0,a_1))$ is a Riordan array~\cite{riordan} :
\[ \mathsf{T}((a_0,a_1))= \left(\frac{1}{1-a_0 t} , \frac{a_1 t}{1-a_0 t}\right),\] and the set of these arrays :  \[\mathcal{T}_2 :=\{\mathsf{T}(\a) \, | \, \a \in \mathbbm{R} \times \mathbbm{R}^* \}\] is a subgroup of the Riordan group isomorphic to $(\mathbbm{R} \times \mathbbm{R}^*, \circ)$. \efa
\bfa \label{fac5}
The group $(\mathcal{T}_2,\cdot)$ is isomorphic to the $``a x +b''$ group generated by affine transformations of the real line. \efa \noindent Recall that the $``a x +b''$ group, generated by all dilations and translations of the real line, is isomorphic to the multiplicative group of all (real) $2 \times 2$ matrices of the form~\cite[p. 35]{sternberg}:
\[\begin{pmatrix} a&b\\0&1 \end{pmatrix} \, ; \quad a \neq 0 .\]


\section{Generating functions} \label{s5} In this section, we prove several properties of the generating functions of columns  of $\mathsf{T}(\a)$.

Let $\mathscr{F}_n(u)$ and $\mathscr{E}_n(u)$ be respectively the ordinary and the
exponential generating functions for the $n$-th column of the
(left-justified) triangle $\mathsf{T}(\a)$. Then we have

\begin{proposition} \label{cgf}
The generating functions $\mathscr{F}_n$ and $\mathscr{E}_n$ take the forms \be \label{GF}
\mathscr{F}_n(u)=\frac{P_n^{(\a)}(u)}{\left(1-a_0 u\right){}^{n+1}}  \quad
and \quad \mathscr{E}_n(u)= e^{a_0 u} R_n^{(\a)}(u), \ee where \[ P_n^{(\a)}(u) =
\sum_{i=\lceil \frac{n}{m}\rceil}^n \alpha_{n,i} u^i(1-a_0 u)^{n-i} \quad \mbox{and} \quad R_n^{(\a)}(u) =
\sum_{i=\lceil \frac{n}{m}\rceil}^n \alpha_{n,i} \frac{u^i}{i!};\] $\alpha_{n,i}$ is defined by~\eqref{alph}. Moreover, the polynomials $P_n^{(\a)}(u)$ and $R_n^{(\a)}(u)$ are subject to the recursive equations \bea
 \label{recurdiff1} P_n^{(\a)}(u) &=& u \sum_{i=1}^m a_i \left(1-a_0 u\right)^{i-1} P_{n-i}^{(\a)}(u) \\
  \label{recurdiff2} \frac{\partial R_n^{(\a)}}{\partial u}(u) &=& \sum_{i=1}^{m} a_i
  R_{n-i}^{(\a)}(u),\eea with the initial conditions $P_0^{(\a)}(u)=R_0^{(\a)}(u)=1$
and $P_n^{(\a)}(u)=R_n^{(\a)}(u)=0$ for $n < 0$.
\end{proposition}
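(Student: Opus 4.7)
The plan is to leverage the closed form~\eqref{Tbis} of $\pnom{k}{n}{\a}$ as a finite linear combination of the binomials $\binom{k}{i}$ with $a_0$-weighted coefficients $a_0^{k-i}\alpha_{n,i}$. Having the column index $n$ fixed while letting the row index $k$ vary, the sum in $i$ is independent of $k$ and can be taken outside of the generating-function sum.

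For the ordinary generating function, I would write
\[
\mathscr{F}_n(u)=\sum_{k\geq 0}\pnom{k}{n}{\a}u^k=\sum_{i=\lceil n/m\rceil}^n \alpha_{n,i}\,u^i\sum_{k\geq i}\binom{k}{i}(a_0 u)^{k-i},
\]
and then invoke the elementary identity $\sum_{k\geq i}\binom{k}{i}x^{k-i}=(1-x)^{-(i+1)}$ to collapse the inner sum to $(1-a_0u)^{-(i+1)}$. Factoring out $(1-a_0u)^{-(n+1)}$ yields the claimed form of $\mathscr{F}_n$ together with the stated expression for $P_n^{(\a)}(u)$. The derivation of the EGF formula is parallel: the inner sum becomes $\sum_{k\geq i}(a_0 u)^{k-i}/(k-i)!=e^{a_0 u}$, and pulling this common factor out gives $\mathscr{E}_n(u)=e^{a_0 u}R_n^{(\a)}(u)$ with $R_n^{(\a)}$ as stated.

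To establish the two recursions, I would translate the Addition/Induction identity $\pnom{k}{n}{\a}=\sum_{i=0}^m a_i\pnom{k-1}{n-i}{\a}$ (valid for $k\geq 1$) into functional equations for $\mathscr{F}_n$ and $\mathscr{E}_n$. Multiplying by $u^k$ and summing over $k\geq 1$ gives, for $n\geq 1$,
\[
(1-a_0 u)\,\mathscr{F}_n(u)=u\sum_{i=1}^m a_i\,\mathscr{F}_{n-i}(u).
\]
Substituting $\mathscr{F}_n(u)=P_n^{(\a)}(u)/(1-a_0u)^{n+1}$ and clearing the common denominator $(1-a_0u)^n$ produces exactly~\eqref{recurdiff1}. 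For $\mathscr{E}_n$, instead multiply by $u^{k-1}/(k-1)!$ and sum over $k\geq 1$; the left-hand side becomes $\mathscr{E}_n'(u)$, while the right-hand side is $\sum_{i=0}^m a_i\mathscr{E}_{n-i}(u)$. Substituting $\mathscr{E}_n=e^{a_0 u}R_n^{(\a)}$ and cancelling $e^{a_0 u}(a_0 R_n^{(\a)})$ from both sides leaves the clean differential recursion~\eqref{recurdiff2}.

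I do not anticipate a serious obstacle: the bulk is routine generating-function manipulation. The only minor care point is bookkeeping of the initial conditions and the lower summation index $\lceil n/m\rceil$: it must be checked that the formulas for $P_n^{(\a)}$ and $R_n^{(\a)}$ specialize to $1$ at $n=0$ (which they do, since $\alpha_{0,0}=1$) and to $0$ for $n<0$, and that the recurrences~\eqref{recurdiff1} and~\eqref{recurdiff2} are consistent with the explicit formulas---this last check can either be done directly by substituting the explicit $P_n^{(\a)}$ and $R_n^{(\a)}$ and reducing to the identity $\alpha_{n,i}=\sum_{j=1}^m a_j \alpha_{n-j,i-1}$ (an instance of~\eqref{alph} via the Addition identity applied to $\pnom{i}{n-i}{\a_1}$), or regarded as automatic since the recurrences were derived by transforming the generating-function identities that these closed forms already satisfy.
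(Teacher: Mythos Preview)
Your proposal is correct and follows essentially the same route as the paper: both insert~\eqref{Tbis} into the generating sums, use the standard binomial generating functions $\sum_k\binom{k}{i}u^k=u^i/(1-u)^{i+1}$ and $\sum_k\binom{k}{i}u^k/k!=e^u u^i/i!$ to obtain the closed forms~\eqref{GF}, and then translate the Addition/Induction relation into the recurrences for $\mathscr{F}_n$ and $\mathscr{E}_n$, from which~\eqref{recurdiff1} and~\eqref{recurdiff2} follow by substitution. Your handling of the EGF recursion (multiplying directly by $u^{k-1}/(k-1)!$) is marginally slicker than the paper's integrate-then-differentiate step, but it is the same argument.
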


\begin{proof}
Using the form~\eqref{Tbis}, we have \[\mathscr{F}_n(u)=\sum_{k=0}^\infty
\pnom{k}{n}{\a}u^k = \sum_{i=\lceil \frac{n}{m}\rceil}^n a_0^{-i} \alpha_{n,i}
\sum_{k=0}^\infty {k \choose i} (a_0 u)^k.\] Employing the generating
function of binomial coefficients~:~$\sum_{k=0}^\infty {k \choose
n}u^k = u^n/(1-u)^{n+1}$, we find that $\mathscr{F}_n(u)$ can be
displayed in the form~\eqref{GF} with \[ P_n^{(\a)}(u)= \sum_{i=\lceil \frac{n}{m}\rceil}^n \alpha_{n,i} u^i(1-a_0 u)^{n-i}.\]

As for the expressions of $\mathscr{E}_n(u)$ and $R_n^{(a)}$, it results in the same way, using the exponential generating function
$\sum_{k=0}^\infty {k \choose n}u^k/k! = e^u u^n/n! $.

To prove the recursion relations~\eqref{recurdiff1} and~\eqref{recurdiff2} we use the Addition /Induction relation of Table~\ref{tab1}. It is immediate that for $n>0$ (recall $\pnom{0}{n}{\a}=0$ if
$n>0$)
\[\mathscr{F}_n(u)=\sum_{k=0}^\infty \pnom{k}{n}{\a}u^k =\sum_{k=1}^\infty \left( \sum_{i=0}^{m}
a_i \pnom{k-1}{n-i}{\a} \right) u^k = u \sum_{i=0}^m a_i
\mathscr{F}_{n-i}(u).
\] i.e, \[\left(1-a_0 u\right)\mathscr{F}_n(u)=u \sum_{i=1}^m
a_i \mathscr{F}_{n-i}(u),\] which yields the desired recurrence. On the other
hand, employing also the Addition/Induction relation, we find for
$n>0$
\[\mathscr{E}_n(u)= \sum_{k=1}^\infty
\pnom{k}{n}{\a}\frac{u^k}{k!} = \sum_{i=0}^{m} a_i \left(
\sum_{l=0}^\infty \pnom{l}{n-i}{\a} \frac{u^{l+1}}{(l+1)!}\right).\]
Taking the derivative of both sides, we find
\[\frac{\partial \mathscr{E}_n}{\partial u}(u)=\sum_{i=0}^m a_i \mathscr{E}_{n-i}(u),\]
from which equation~\eqref{recurdiff2} results strait-forwardly.
\end{proof}

 From~\eqref{GF}, the generating functions for the polynomials
$P_n^{(\a)}(u)$ and $R_n^{(\a)}(u)$ are  \bea
\sum_{n=0}^\infty P_n^{(\a)}(u) z^n &=& \frac{1-a_0u}{1-u
p_{\a}\left(\left(1-a_0u\right)z\right)}\;;\\
\label{egf} \sum_{n=0}^\infty R_n^{(\a)}(u) z^n &=&
\exp\left(u\left(p_{\a}(z)-a_0\right)\right). \eea

\subsection{The special case $m=2$}
If $m=2$, the two-term recurrence~\eqref{recurdiff1} can be explicitly solved by standard techniques. For the colorless case, this yields
\be \label{pm2} P_n^{(2)}(u)=\frac{\left(u+\sqrt{u(4-3 u)
   }\right)^{n+1}-\left(u-\sqrt{u(4-3 u)
   }\right)^{n+1}}{2^{n+1}\sqrt{u(4-3 u)}}.\ee The first few polynomials are
   \begin{center} \begin{tabular}{lll}
$n$ & &$ u^{-\lceil n/2 \rceil} P_n^{(2)}(u)$\\
  \hline
  0 && 1 \\
  1 &&1  \\
  2 && 1\\
  3 && $2-u$ \\
  4 && $1+u-u^2$ \\
  5 && $3-2u$ \\
  6 && $1 + 3 u - 4 u^2 + u^3$\\
  7 && $4-2 u-2 u^2+u^3$.\\
\hline
&&
\end{tabular}\end{center}
 From~\eqref{alph}, we derive $\alpha_{n,i} ={i \choose  n-i}$. Since $\sum_{i}{i \choose  n-i}=F_{n+1} $, where $F_{n}$ is the $n$-th Fibonacci number, we have $2^n P_n^{(2)}(1/2)=F_{n+1}$. Actually, we find the following appealing connection between Fibonacci numbers and trinomial coefficients : \begin{corollary} \label{fibo} For $n \geq 1$ \[ \sum_{k=\lceil (n-1)/2\rceil}^\infty \pnom{k}{n-1}{2} \,{1 \over 2^{k+1}} = F_{n}.\]
\end{corollary}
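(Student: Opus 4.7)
The plan is to recognize the left-hand side as a specialization at $u=1/2$ of the ordinary generating function $\mathscr{F}_{n-1}(u)=\sum_{k}\pnom{k}{n-1}{2}u^{k}$ of the $(n-1)$-st column of the colorless triangle $\mathsf{T}_{2}$. Proposition~\ref{cgf}, applied with $m=2$ and $\a=(1,1,1)$ (so $a_0=1$), gives $\mathscr{F}_{n-1}(u)=P_{n-1}^{(2)}(u)/(1-u)^{n}$; evaluating at $u=1/2$ immediately yields
\[
\sum_{k=\lceil (n-1)/2\rceil}^{\infty}\pnom{k}{n-1}{2}\frac{1}{2^{k+1}}
\;=\;\frac{\mathscr{F}_{n-1}(1/2)}{2}
\;=\;2^{n-1}P_{n-1}^{(2)}(1/2).
\]
Thus the corollary reduces to identifying $Q_{n}:=2^{n}P_{n}^{(2)}(1/2)$ with $F_{n+1}$.

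The cleanest route I would take is via the recursion, which avoids the closed form~\eqref{pm2}. Specialising~\eqref{recurdiff1} to $m=2$, $\a=(1,1,1)$ gives
\[
P_{n}^{(2)}(u)\;=\;uP_{n-1}^{(2)}(u)\;+\;u(1-u)P_{n-2}^{(2)}(u).
\]
Setting $u=1/2$ and multiplying through by $2^{n}$ collapses the coefficients to exactly $Q_{n}=Q_{n-1}+Q_{n-2}$. The initial conditions from Proposition~\ref{cgf} give $P_{0}^{(2)}=1$ and $P_{1}^{(2)}(u)=u$, hence $Q_{0}=1=F_{1}$ and $Q_{1}=1=F_{2}$. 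The three data points --- the Fibonacci recurrence, the matching initial conditions, and uniqueness of solutions of a linear two-term recurrence --- force $Q_{n}=F_{n+1}$, and therefore $2^{n-1}P_{n-1}^{(2)}(1/2)=Q_{n-1}=F_{n}$, as desired.

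A sanity check via the explicit formula~\eqref{pm2} is reassuring: at $u=1/2$ we have $u(4-3u)=5/4$, so $\sqrt{u(4-3u)}=\sqrt{5}/2$; substituting and cleaning up constants gives $Q_{n}=(\phi^{n+1}-\psi^{n+1})/\sqrt{5}$ with $\phi,\psi=(1\pm\sqrt{5})/2$, which is Binet's formula for $F_{n+1}$. There is no real obstacle here --- the entire argument is mechanical once Proposition~\ref{cgf} is in hand; the only place where care is needed is the bookkeeping of the index shift $n\mapsto n-1$ and of the factor $2^{n}$ coming from $(1-1/2)^{-n}$, which together conspire to turn the evaluation of $\mathscr{F}_{n-1}(1/2)/2$ precisely into $F_{n}$.
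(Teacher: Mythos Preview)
Your argument is correct. You and the paper both reduce the identity to showing $2^{n}P_{n}^{(2)}(1/2)=F_{n+1}$, but you reach this by a different route. The paper uses the explicit description $P_{n}^{(\a)}(u)=\sum_{i}\alpha_{n,i}u^{i}(1-a_0u)^{n-i}$ from Proposition~\ref{cgf} together with the evaluation $\alpha_{n,i}=\binom{i}{n-i}$ from~\eqref{alph}; at $u=1/2$ this collapses to $2^{-n}\sum_{i}\binom{i}{n-i}$, and the paper then invokes the classical diagonal-sum identity $\sum_{i}\binom{i}{n-i}=F_{n+1}$. You instead specialise the recurrence~\eqref{recurdiff1} at $u=1/2$ and verify directly that $Q_{n}=2^{n}P_{n}^{(2)}(1/2)$ satisfies the Fibonacci recurrence with the right initial data (and cross-check via~\eqref{pm2} and Binet). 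Your approach is self-contained in that it does not appeal to the external binomial identity, whereas the paper's approach makes the link to that well-known formula explicit; both are equally short.
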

Moreover, from the generating function~\eqref{egf}, we see that $R_n^{(2)}(u)$ can be expressed as: \[ R_n^{(2)}(u)=(-1)^n \frac{\left(\sqrt{-u}\right)^n}{n!} H_n\left(\frac{\sqrt{-u}}{2}\right),\] where $H_n$ is the $n$-th Hermite polynomial~\cite{HMF}. As an interesting by-product of this connection, we find an expression of the Hermite polynomials in terms of colorless trinomial coefficients:
\begin{corollary} \label{hermite} For all $n$, we have the following representation of Hermite polynomials :  \[ H_n(x)=  \frac{(-1)^n n!}{2^n} \, e^{4 x^2} \sum _{k=\lceil n/2\rceil}^{\infty }
    \enom{k}{n}{2}  \frac{(-4)^k x^{2 k-n}}{k!}. \]\end{corollary}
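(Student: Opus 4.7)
The plan is to combine two descriptions of the polynomial $R_n^{(2)}(u)$. On the one hand, the closed form stated just before the corollary gives
\[ R_n^{(2)}(u) \;=\; \frac{(-1)^n (\sqrt{-u})^n}{n!}\, H_n\!\left(\frac{\sqrt{-u}}{2}\right). \]
On the other hand, Proposition~\ref{cgf} provides the series expression $R_n^{(2)}(u) = e^{-u}\mathscr{E}_n(u)$ with $\mathscr{E}_n(u) = \sum_{k\geq 0}\enom{k}{n}{2} u^k/k!$, since $a_0=1$ in the colorless trinomial case. The first step is the change of variable $u = -4x^2$, which makes $\sqrt{-u}/2 = x$ and converts the Hermite identity into
\[ R_n^{(2)}(-4x^2) \;=\; \frac{(-1)^n (2x)^n}{n!}\, H_n(x), \]
so that
\[ H_n(x) \;=\; \frac{(-1)^n n!}{(2x)^n}\, R_n^{(2)}(-4x^2). \]

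Next I would substitute $u = -4x^2$ into the second representation. This yields
\[ R_n^{(2)}(-4x^2) \;=\; e^{4x^2}\sum_{k=0}^{\infty} \enom{k}{n}{2}\frac{(-4x^2)^k}{k!} \;=\; e^{4x^2}\sum_{k=\lceil n/2\rceil}^{\infty} \enom{k}{n}{2}\frac{(-4)^k\, x^{2k}}{k!}, \]
the truncation of the sum being due to $\enom{k}{n}{2}=0$ whenever $k < n/2$. Plugging this into the inverted Hermite identity and absorbing $(2x)^{-n} = 2^{-n} x^{-n}$ into $x^{2k}$ to form $x^{2k-n}$ produces precisely the stated formula.

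The only subtlety concerns the formal square root $\sqrt{-u}$: the parity relation $H_n(-y)=(-1)^n H_n(y)$ ensures that $(\sqrt{-u})^n H_n(\sqrt{-u}/2)$ is a bona fide polynomial in $u$, so the substitution $u=-4x^2$ is unambiguous and both sides are polynomial expressions in $x$. The apparent pole at $x=0$ coming from the intermediate factor $(2x)^{-n}$ is harmless: it cancels against the zeros of $x^{2k}$ in the series, leaving non-negative exponents $x^{2k-n}$ (since $k\geq \lceil n/2\rceil$), and the right-hand side is simply the power-series expansion of the entire function $\mathscr{E}_n(-4x^2)$. Aside from this bookkeeping, no real obstacle arises: the substantive work has already been done in establishing the Hermite identification from the bivariate generating function~\eqref{egf} and in proving Proposition~\ref{cgf}.
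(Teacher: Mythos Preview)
Your proof is correct and follows exactly the route the paper intends: combine the Hermite identification $R_n^{(2)}(u)=(-1)^n(\sqrt{-u})^n H_n(\sqrt{-u}/2)/n!$ with the relation $R_n^{(2)}(u)=e^{-u}\mathscr{E}_n(u)$ from Proposition~\ref{cgf}, then substitute $u=-4x^2$ and rearrange. The paper itself does not spell out the computation beyond calling it a ``by-product'' of the connection, so your write-up is in fact more detailed than what appears there; your remarks on the parity of $H_n$ and the cancellation of the factor $x^{-n}$ are the natural pieces of bookkeeping needed to make the derivation rigorous.
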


\subsection{On the zeros of $P_n^{(m)}$ - A conjecture} The rational form of $\mathscr{F}_n(u)$ in Proposition~\ref{cgf} is characteristic of the generating functions of polynomials. The polynomials $P_n^{(m)}$ play the role of Eulerian polynomials appearing in the numerator of the generating function $\sum_k k^n u^k = A_n(u)/(1-u)^{n+1}$~\cite[p.209]{Stanly}. The Eulerian polynomials $A_n(u)$ are known to have all zeros real~\cite{harper}. It is quite normal to see if this property is also valid for the polynomials $P_n^{(m)}$.

If we take a look at the polynomial~\eqref{pm2}, we observe that a non-trivial zero of it (i,e. $\neq 0$) must be such that $(u-\sqrt{u(4-3 u)
   })/(u+\sqrt{u(4-3 u)})$ is an $(n+1)$-th root of unity. If $u_p$ denote such zeros then ( $i=\sqrt{-1}$ ) \[u_p=\frac{\left(1+e^{i \frac{2 \pi  p}{n+1}}\right)^2}{e^{i\frac{4 \pi  p}{n+1}}+e^{i \frac{2 \pi  p}{n+1}}+1} =2 \,\frac{1+\cos \left(\frac{2 \pi  p}{n+1}\right)}{1+2 \cos \left(\frac{2 \pi  p}{n+1}\right)},\] with $p \not\in\{(n+1)/3, \, 2(n+1)/3\}$ whenever $n+1$ is a multiple of 3. Thus the polynomials~\eqref{pm2} has real zeros only. Similar investigations for the case $m=3$ leads to the same conclusion. Actually, several numerical experimentations suggest forcibly the truth of
\begin{conjecture}For all $m \geq  1$, the colorless polynomials $P_n^{(m)}$ have real zeros only.
\end{conjecture}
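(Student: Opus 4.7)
Following the paper's $m=2$ approach, the natural plan is to exploit the linear recurrence
\[ P_n^{(m)}(u) = u \sum_{i=1}^m (1-u)^{i-1} P_{n-i}^{(m)}(u) \]
of Proposition~\ref{cgf}. Summing the geometric series in $x$, its characteristic polynomial simplifies, after clearing the denominator $x-(1-u)$, to
\[ x^{m+1} - x^m + u(1-u)^m = 0, \]
with $x = 1-u$ as an extraneous root and the remaining $m$ roots $x_1(u),\ldots,x_m(u)$ governing the representation $P_n^{(m)}(u) = \sum_{j=1}^m c_j(u)\,x_j(u)^n$. The first steps are to compute $c_j(u)$ from the low-$n$ values supplied by~\eqref{GF} and to reduce $P_n^{(m)}(u)=0$ to an algebraic condition on the ratios $x_j/x_k$, mirroring the condition $(x_1/x_2)^{n+1}=1$ that the paper exploits for $m = 2$.

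Next, I would try to linearise this condition via an angular parametrisation. The substitution $x = (1-u)\,w$ recasts the characteristic equation as $(1-u)\,w^{m+1} - w^m + u = 0$, whose roots at $u=1$ are the $m$-th roots of unity; a perturbation or continuity argument might then track the roots along a real curve in $u$ as the phase varies over the $(n+1)$-st roots of unity. Failing an explicit approach, one could try induction on $n$ with a strengthened hypothesis that $P_{n-1}^{(m)},\ldots,P_{n-m}^{(m)}$ are jointly interlaced in a suitable higher-order sense.

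The principal obstacle is that for $m\ge 3$ there are multiple independent ratios among the characteristic roots, and the vanishing of $\sum_j c_j x_j^n$ no longer decouples into a single phase condition. Before investing in such a program one should sanity-check the conjecture directly: for $(m,n)=(3,4)$ the formula $P_n^{(m)}(u)=\sum_{i=\lceil n/m\rceil}^n \alpha_{n,i}\,u^i(1-u)^{n-i}$ with $\alpha_{n,i}=\enom{i}{n-i}{m-1}$ gives $\alpha_{4,2}=3,\ \alpha_{4,3}=3,\ \alpha_{4,4}=1$ and hence
\[ P_4^{(3)}(u)=3u^2(1-u)^2+3u^3(1-u)+u^4=u^2(u^2-3u+3), \]
whose quadratic factor has discriminant $-3$ and thus a pair of complex zeros $(3\pm i\sqrt{3})/2$. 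Parallel computations give $P_5^{(3)}(u)=u^2(u^3-2u^2+2)$ with cubic discriminant $-44$, and $P_5^{(4)}(u)=-u^2(u-2)(u^2-2u+2)$, both manifestly non-real-rooted.

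The conjecture as stated is therefore already false at $(m,n)=(3,4)$, and the sensible course is to pivot to identifying a correct replacement: perhaps $\gamma$-nonnegativity or log-concavity of the coefficients of $u^{-\lceil n/m\rceil}P_n^{(m)}(u)$, or real-rootedness on a restricted subfamily such as the half-filling case ($n = mk/2$) of Theorem~\ref{lattice}. I would first tabulate $P_n^{(m)}$ for small $(m,n)$, classify the observed real-versus-complex root patterns, and then try to fit and prove the strongest corrected statement that still specialises to the paper's established $m=2$ result, using the characteristic-root representation above as the principal analytic tool.
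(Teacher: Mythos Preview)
Your computations are correct, and they \emph{disprove} the conjecture rather than prove it. I verified $P_4^{(3)}(u)=u^2(u^2-3u+3)$ directly from the generating-function definition: with $\enom{k}{4}{3}=0,0,3,12,31,\ldots$ for $k=0,1,2,3,4,\ldots$, the Taylor expansion of $u^2(u^2-3u+3)/(1-u)^5$ reproduces these values exactly, and the quadratic factor has discriminant $-3$. Your further examples $P_5^{(3)}$ and $P_5^{(4)}$ also check out. The paper offers no proof of this statement --- it is posed as a conjecture, supported only by the explicit $m=2$ analysis and the remark that ``similar investigations for the case $m=3$ lead to the same conclusion''; your calculation shows that this remark is simply in error, the conjecture being false already at $(m,n)=(3,4)$.

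The one substantive comment on your write-up is that the opening paragraphs are framed as a proof plan for a statement you then refute; since the counterexample is the content, lead with it. The closing suggestions (log-concavity, $\gamma$-nonnegativity, restricted subfamilies) are reasonable directions for a corrected conjecture, but note that $P_4^{(3)}(u)/u^2=3-3u+u^2$ is not log-concave in its coefficients either, so that particular replacement would need further adjustment.
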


\section{Asymptotics : the entropy density function} \label{s6}
This section is devoted to the study of a function that
characterizes the polynomial triangles in the limit where the row
index $k$ tends to infinity and the column index $n$ increases
proportionally, namely the asymptotic \emph{entropy density
function}. But before defining this notion which originates from
statistical mechanics and information theory, we recall a general formula that we shall
rely upon :

\begin{thm} \emph{(Daniels~\cite[p.646]{daniels}, Good~\cite[p.868]{good})} For a power series or a polynomial $f(t)$ with
non-negative real coefficients and a strictly positive radius of
convergence, define
\[\Delta f(t) = t\frac{f'(t)}{f(t)}\quad ; \quad \delta \!f(t) = \frac{f''(t)}{f(t)}-
\left(\frac{f'(t)}{f(t)}\right)^2 +  \frac{f'(t)}{t f(t)}.\] Assume that the function $f(t)$ is \emph{aperiodic}, i.e,
$\gcd \{i,\, [t^i]f(t) > 0\}=1$, and suppose that the equation $\Delta f(t) = n/k$ has a real positive
solution $x$ smaller than the radius of convergence of $f$. Then,
for $n,k \rightarrow +\infty$ and $n/k$ finite, \be \label{daniels} \left[t^n
\right]\left(f(t)\right)^k = \frac{f^k(x)}{x^{n+1}\sqrt{2 \pi k \;
\delta \!f(x)}} \left(1 + o(1)\right),\ee uniformly as $k
\rightarrow \infty$. \hfill{$\Box$}
\end{thm}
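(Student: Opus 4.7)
The plan is to establish the formula via the classical saddle-point method (Hayman--Daniels--Good). Start from Cauchy's coefficient formula: for the given positive $x$ smaller than the radius of convergence of $f$, one has
\[ [t^n]\bigl(f(t)\bigr)^k = \frac{1}{2\pi i}\oint_{|t|=x} \frac{f(t)^k}{t^{n+1}}\,dt = \frac{1}{2\pi x^n}\int_{-\pi}^{\pi} e^{\phi(\theta)}\,d\theta, \]
where $\phi(\theta) := k\log f(xe^{i\theta}) - in\theta$. The whole point of choosing the radius $x$ to satisfy $\Delta f(x)=n/k$ is that $\theta=0$ becomes a stationary point of $\phi$: setting $u=xe^{i\theta}$, differentiation yields $\phi'(\theta) = i\bigl(k\,\Delta f(u) - n\bigr)$, which vanishes at $\theta=0$ exactly under the hypothesis.

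Next I would identify the Gaussian variance. A short manipulation of $\Delta f(u) = uf'(u)/f(u)$ gives
\[ u\,\frac{d\,\Delta f}{du}(u) = u^2\!\left(\frac{f''(u)}{f(u)} - \frac{f'(u)^2}{f(u)^2}\right) + \frac{u f'(u)}{f(u)} = u^2\,\delta\!f(u), \]
so that $\phi''(0) = -k x^2\,\delta\!f(x)$, a negative real number. This algebraic identity is what forces the specific combination $\delta\!f$ from the statement to appear in the final answer. On the shrinking central window $|\theta|\le k^{-2/5}$, Taylor's theorem then gives $\phi(\theta) = k\log f(x) - \tfrac12 k x^2\,\delta\!f(x)\,\theta^2 + O(k|\theta|^3)$, and a Gaussian evaluation produces $f(x)^k\sqrt{2\pi/(k x^2\,\delta\!f(x))}$; dividing by $2\pi x^n$ delivers exactly the right-hand side of~\eqref{daniels}.

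The remaining --- and genuinely hard --- step is to show that the complementary range $|\theta|>k^{-2/5}$ contributes only $o(1)$ times the main term. Here the \emph{aperiodicity} hypothesis is indispensable. Since $\operatorname{Re}\phi(\theta) = k\log|f(xe^{i\theta})|$ and $|f(xe^{i\theta})|\le \sum_i ([t^i]f)\,x^i = f(x)$ by nonnegativity of the coefficients, equality elsewhere on the circle would force the nonzero indices of $f$ to lie in a common arithmetic progression with common difference $>1$, contradicting $\gcd\{i:[t^i]f>0\}=1$. A compactness argument then yields $|f(xe^{i\theta})|/f(x) \le 1 - c\theta^2$ for small $|\theta|$ and strict inequality bounded away from $1$ for $|\theta|$ outside any neighborhood of $0$; raised to the $k$-th power, both regimes make the tail exponentially smaller than the central contribution.

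The main obstacle is thus not the local expansion but the global uniformity: one must ensure that the cubic remainder in Taylor's expansion, the lower bound on $\delta\!f(x)$, and the strict inequality $|f(xe^{i\theta})|<f(x)$ are all controlled uniformly as $n/k$ ranges over a compact subset of $\{\Delta f(x):0<x<\rho\}$. This is the technical heart that upgrades the saddle-point heuristic to a rigorous theorem, and is the part where Daniels' and Good's original arguments devote most of their work.
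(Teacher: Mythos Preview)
Your sketch is a correct and well-organized account of the saddle-point method: the choice of radius to make $\theta=0$ stationary, the identification $\phi''(0)=-k x^2\,\delta\!f(x)$, the central Gaussian estimate, and the use of aperiodicity to kill the tails are exactly the right ingredients, and your derivative bookkeeping checks out. However, there is nothing to compare against: the paper does \emph{not} prove this theorem. It is quoted from Daniels and Good with explicit page references, and the terminal $\Box$ right after the statement signals that no argument is supplied. So your proposal is not an alternative to the paper's proof; it is a proof where the paper offers none (and, unsurprisingly, it is essentially the argument one finds in the cited sources).
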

The ratio $n/k$, denote it $\rho$, is the mean number of balls in
one box. As a function of the saddle point $x$, $\rho$ is strictly
increasing given that $x \partial_x \rho(x) = x^2 \delta f(x)$ is
the variance of the (non-degenerate) random variable taking a value $i \in \{0, \ldots ,m\}$ with probability
$a_i x^i /f(x)$. The function $\rho(x)=\Delta f(x)$ itself being the
expectation value of this variable.

\begin{remark}The Daniels-Good theorem leads to the known asymptotic of the central trinomial coefficient, i.e, $n/k=1$ (A002426) : For $\a=(a_0,a_1,a_2)$; $a_i >0$, a little calculation gives
\[\pnom{k}{k}{\a}\sim \frac{(a_1+2\sqrt{a_0a_2})^{k+1/2}}{2 \sqrt[4]{a_0 a_2}\sqrt{\pi k}} \quad \hbox{as} \quad  k \to \infty.\]
Choosing $\a=(1,2,1)$, we recover also the asymptotic of central binomial coefficient
${2k \choose k} \sim 4^k/\sqrt{\pi k}$ as $k \to \infty$. \end{remark}
\subsection{Entropy density function} We define the \emph{entropy density function} as follows \begin{definition}When $k$ goes to infinity
and $\rho$ is fixed (the so-called thermodynamical limit), we define the
entropy density function (or entropy per box) as the limit \[
\lim_{k \rightarrow \infty} \frac{1}{k} \ln \pnom{k}{\rho k}{\a}
\stackrel{\mbox{\tiny def}}= h^{(\a)}(\rho)\,, \qquad 0 \leq \rho \leq m\] The existence of the limit is guaranteed by the Daniels-Good theorem.\end{definition}

In what follows, we assume that the $a_i$'s  are non-negative and the polynomial $p_\a$ is aperiodic
(in other words, there exists no integer $r$ such that
$p_{\a}(t)=\sum_i a_{r i}t^{ri}$). Thus, specializing~\eqref{daniels} for the polynomial coefficients, we get for a given $\rho \in [0,m]$ \be
\label{entropy} h^{(\a)}(\rho)=\ln p_{\a}(x(\rho)) - \rho \ln
x(\rho) , \ee where $x(\rho)$ is the
(unique) real positive zero of the polynomial $\sum_{i=0}^m
(i-\rho) a_i t^i$.  Explicit expressions of the entropy density function can be found for $m\leq 4$. For
instance
\[h^{((a_0, \,a_1))}(\rho)= (1-\rho)\ln a_0 + \rho \ln a_1 - \rho \ln \rho - (1-\rho) \ln(1-\rho),\]
which coincide, in the uncolored case, with the entropy function for the Bernoulli trial with parameter $\rho$ as probability of success.


\begin{figure}\begin{center}
\includegraphics[width=10cm]{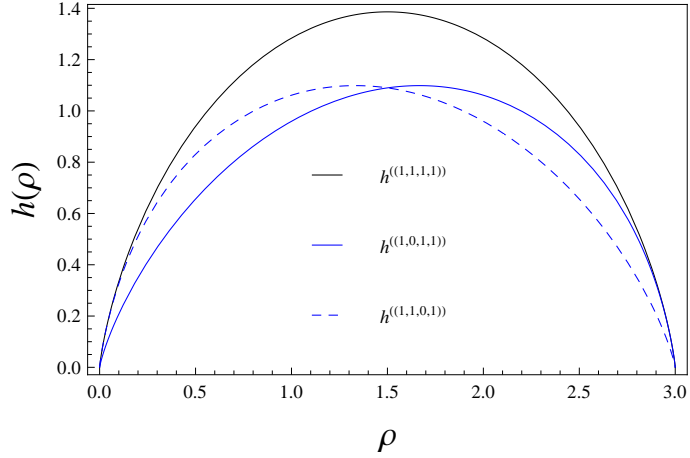}
  \caption{Entropy density function vs $\rho=n/k$ for the un-weighted
quadrinomial coefficients. }\end{center}
\end{figure}

Obviously, the function~\eqref{entropy} is continuous and
differentiable for $0 <\rho < m$. We now prove the main claim of this section.
\begin{theorem} \label{ent} The density function $h^{(\a)}$ fulfils the following properties
\begin{enumerate}
    \item[\emph{(i)}] $h^{(\a)}$ is strictly concave;
    \item[\emph{(ii)}] $h^{(\a)}$ is unimodal and reaches its peak at the point $\mu=\sum_i i a_i/\sum_i a_i$
    and \be \max_{0 \leq \rho \leq m} h^{(\a)}(\rho)=\ln\left(\sum_{i=0}^m a_i \right) ;\ee
    \item[\emph{(iii)}] $h^{(\a)}(\rho) \geq 0$ for all $\rho$, whenever $a_0 \geq 1$ and $a_m \geq 1$;
    \item[\emph{(iv)}] Particle-Hole duality \be h^{(\a)}(\rho)=h^{(\mathrm{J} \a)}(m-\rho) ; \ee
    \item[\emph{(v)}] As a function of $\a$: \be \Theta_\a h^{(\a)}(\rho)=1, \quad  \forall \rho\ee where $\Theta_\a=\sum_{i=0}^m a_i \partial_{a_i}$ is the Theta (or homogeneity) operator.
\end{enumerate}
\end{theorem}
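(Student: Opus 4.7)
The strategy rests on extracting one very clean consequence of the saddle-point relation $\rho = \Delta p_\a(x) = x\, p_\a'(x)/p_\a(x)$ that implicitly defines $x = x(\rho)$, and then deducing every item from it. Differentiating $h^{(\a)}(\rho) = \ln p_\a(x) - \rho \ln x$ with respect to $\rho$, and using the chain rule, the terms involving $x'(\rho)$ collect into $\left(\frac{p_\a'(x)}{p_\a(x)} - \frac{\rho}{x}\right)x'(\rho)$, which vanishes by the saddle-point equation. This leaves the remarkably simple identity
\begin{equation*}
\frac{dh^{(\a)}}{d\rho}(\rho) \;=\; -\ln x(\rho).
\end{equation*}
I would promote this to the main computational lemma and derive everything from it.

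For (i), differentiating once more gives $h''(\rho) = -x'(\rho)/x(\rho)$; the paper has already observed that $x \mapsto \rho(x) = \Delta p_\a(x)$ is strictly increasing (its derivative equals a positive variance), so $x'(\rho)>0$ and $h'' < 0$. For (ii), setting $h'(\mu)=0$ forces $x(\mu)=1$; substituting $x=1$ into $\rho = x p_\a'(x)/p_\a(x)$ yields $\mu = \sum_i i a_i / \sum_i a_i$, and plugging $x=1$ back into $h$ gives $h^{(\a)}(\mu) = \ln p_\a(1) = \ln \sum_i a_i$. For (iv), I would use $p_{\mathrm{J}\a}(t) = t^m p_\a(1/t)$; a direct computation shows $\Delta p_{\mathrm{J}\a}(t) = m - \Delta p_\a(1/t)$, so the saddle point for $m-\rho$ under $\mathrm{J}\a$ is $y = 1/x(\rho)$, and a one-line substitution into the formula for $h^{(\mathrm{J}\a)}(m-\rho)$ recovers $h^{(\a)}(\rho)$.

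For (v), rather than differentiating $h$ in each $a_i$ separately, I would exploit scale-invariance: the polynomial $p_\a$ is homogeneous of degree $1$ in $\a$, so the saddle-point equation $\Delta p_{\lambda\a}(x)=\rho$ is unchanged under $\a \mapsto \lambda\a$, giving $x(\rho;\lambda\a)=x(\rho;\a)$, hence $h^{(\lambda\a)}(\rho) = \ln\lambda + h^{(\a)}(\rho)$. Differentiating at $\lambda=1$ gives Euler's identity $\Theta_\a h^{(\a)} = 1$.

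The genuinely delicate item is (iii). Once concavity is in hand, it suffices to verify $h^{(\a)}(0)\ge 0$ and $h^{(\a)}(m)\ge 0$, for then $h^{(\a)}(\rho) \ge \frac{m-\rho}{m}h^{(\a)}(0)+\frac{\rho}{m}h^{(\a)}(m)\ge 0$ on $[0,m]$. The obstacle is that at $\rho=0$ the saddle point $x(\rho)\to 0$ and at $\rho=m$ it blows up, so $h^{(\a)}$ has to be read as a limit. I would handle the endpoint $\rho \to 0^+$ by letting $j\ge 1$ be the least index with $a_j \ne 0$ and expanding the saddle-point equation to leading order: $\rho \sim j a_j x^j / a_0$, which gives $x(\rho) \sim (a_0 \rho/(j a_j))^{1/j}$ and in particular $\rho \ln x(\rho) \to 0$. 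Hence $h^{(\a)}(0)=\ln a_0 \ge 0$ exactly when $a_0\ge 1$. The symmetric treatment at $\rho=m$, via the substitution $t \mapsto 1/t$ (equivalently, via the duality already established in (iv)), yields $h^{(\a)}(m)=\ln a_m \ge 0$, closing the argument.
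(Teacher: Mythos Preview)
Your proof is correct and follows essentially the same route as the paper: both hinge on the identity $h'(\rho)=-\ln x(\rho)$, deduce strict concavity from $x'(\rho)>0$ (the variance being positive), locate the maximum at $x=1$, and handle (iii) by computing the endpoint limits $h^{(\a)}(0)=\ln a_0$, $h^{(\a)}(m)=\ln a_m$ and invoking concavity. The only cosmetic differences are in (iv) and (v), where you argue directly with the closed form~\eqref{entropy} while the paper instead applies the finite-$k$ symmetry and homogeneity identities to $\pnom{k}{n}{\a}$ before passing to the limit.
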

\begin{proof}{~}

\noindent (\emph{i})  \; Consider the random variable $\xi$ whose probability mass function is given by $\mathbb{P}(\xi=i)=a_i x^i/ p_\a(x)$, for $i= 0, \ldots, m$. As noted above, the variance of $\xi$ is given by \[\mathbb{V}(\xi)=x \partial_x \rho (x) = x^2 \delta p_\a(x).\] Moreover, from~\eqref{entropy}, we derive \be\label{deriv} \partial_\rho h^{(\a)} (x)=-\ln x(\rho).\ee Whence \[\partial_\rho^2 h^{(\a)} (x)=- \frac{1}{x(\rho)}\partial_\rho x(\rho)=-{1 \over  \mathbb{V}(\xi)} < 0.\] Thus,  the entropy density function is strictly concave.
\medskip

\noindent {(\emph{ii})} \;Since $h^{(\a)}$ is concave, its maximum is attained when $x(\rho)=1$ as we can read from~\eqref{deriv}, i.e., when $h^{(\a)}(\rho(x=1))=\ln p_\a(1)$ according to~\eqref{entropy}. To show that  $h^{(\a)}$ is monotonically increasing for $0 < \rho < \mu$ and monotonically decreasing for $\mu < \rho < m$, recall that \be \label{rho}\rho(x)= x \frac{p_\a'(t)|_{t \rightarrow x}}{p_\a(x)},\ee and remark that $\partial_\rho h^{(\a)}(x)> 0$ if \, $0 < x < 1$, i.e., $0=\rho(0) < \rho(x) < \rho(1)= \mu$, because the function $\rho(x)$ is strictly increasing as noted above, and $\partial_\rho h^{(\a)}(x) <  0$ if \, $ x>1$, i.e., $\rho(x) > \mu$.
\medskip

\noindent (\emph{iii}) \;We see from~\eqref{rho} that if $\rho$ goes to 0, then $x \rightarrow 0$, since the zeros of $p_\a'(t)|_{t \rightarrow x}$ are essentially negative and if $\displaystyle \rho \rightarrow m^+$ then $x \rightarrow +\infty$. Moreover, from~\eqref{entropy} we derive that
\[\lim_{\rho \rightarrow 0} h^{(\a)}(\rho)=\lim_{x \rightarrow 0} h^{(\a)}(\rho(x))= \ln a_0 ,\] and \[\lim_{\rho \rightarrow m} h^{(\a)}(\rho)= \lim_{x \rightarrow +\infty} h^{(\a)}(\rho(x))=\ln a_m.\] These limits together with the strict concavity implies  that the entropy density is non-negative if $a_0 \geq 1$ and $a_m \geq 1$ .
\medskip

\noindent (\emph{iv}) The Particle-Hole duality is obvious from the polynomial symmetry (Table~\ref{tab1}).
\medskip

\noindent (\emph{v})\;  In fact  the differential equation is valid for all $k$ and $n$, in particular in the thermodynamical limit: \[ \Theta_\a \frac{1}{k}\ln \pnom{k}{n}{\a}=\frac{1}{k} \pnom{k}{n}{\a}^{-1} \Theta_\a \pnom{k}{n}{\a}=1,\] where we have used the homogeneity of~\eqref{pnom} as a polynomial in $\a$:  $\Theta_\a \pnom{k}{n}{\a}=k \pnom{k}{n}{\a}$ which follows readily from the trivial formula $\Theta_\a p_\a^k(t)=k p_\a^k(t)$. \end{proof}


\section{Concluding remarks}\label{s7} In his paper, Richard C. Bollinger~\cite{Boll3}, concludes with the hopes that, \begin{center} \begin{minipage}[b]{4 in}``like $\mathsf{T}_1$ has certainly been a rich source of interesting and useful mathematics, its extended relatives (i.e, $\T$) potentially may serve as equally fruitful objects of study.'' \end{minipage}\end{center} Our study concretizes in some extent the Bollinger's suggestion. We too believe that deeper aspects still be discovered for the polynomial triangles. The following items give a sample of our observations that make interesting exercises :
\begin{enumerate}[1.]
\item Through the expression~\eqref{gegen}, several recurrences of Gegenbauer polynomials can be rewritten in terms of trinomial coefficients and, possibly, could be extended to general polynomial ones. For instance, we find~\cite{wolfram} \begin{eqnarray} \nonumber (2 k-n-1) (2 k-n)\enom{k}{n}{2}&=&k (7 k-3 n-5) \pnom{k-1}{n}{2}-3 (k-1) k \pnom{k-2}{n}{2}\\
\nonumber 2(k+1) \pnom{k}{n}{2}&=&(2 k-n+2)\pnom{k+1}{n}{2}
   -(k+1)\pnom{k}{n-1}{2}\\
\nonumber  n (2 k-n)\pnom{k}{n}{2} &=&k(2 k-1)
   \pnom{k-1}{n-1}{2}+ 3 (k-1)k \pnom{k-2}{n-2}{2}.\end{eqnarray} Can we find recurrences of the same type for the coefficients~\eqref{pnom}?
  \item The exponential generating function of row sequences of $\mathsf{T}(\a)$ : \[\sum_{n} \pnom{k}{n}{\a}\frac{t^n}{n!}\]provides a natural extension of Laguerre polynomials : $L_k(t)=\sum_n \binom{k}{n}(-t)^n/n!$. Can orthogonality and other properties of Laguerre polynomials be generalized?
   \item It may also be of interest to extend the present work to multivariate polynomials.
\end{enumerate}

\section{ Acknowledgments} The author is grateful to Professor El Hassane Saidi, director of Lab/UFR- High Energy Physics of Mohammed V University, for his kind support and generous encouragements. He thanks Adil Belhaj and Mohammed Daoud for useful discussions.

\end{document}